\def\section{\@startsection{section}{1}%
 \z@{.7\linespacing\@plus\linespacing}{.5\linespacing}%
 {\normalfont\bfseries\scshape\centering}}
\def\subsection{\@startsection{subsection}{2}%
  \z@{.5\linespacing\@plus\linespacing}{.5\linespacing}%
  {\normalfont\bfseries\scshape}}
\def\subsubsection{\@startsection{subsubsection}{3}%
 \z@{.5\linespacing\@plus\linespacing}{-.5em}
  {\normalfont\bfseries\itshape}}
\newcommand\mps[1]{\marginpar{\small\sf#1}}
\newtheorem{Theorem}{Theorem}
\newtheorem{Lemma}[Theorem]{Lemma}
\newtheorem{Proposition}[Theorem]{Proposition}
\newtheorem{Definition}[Theorem]{Definition}
\newtheorem{Remark}[Theorem]{Remark}
\def\qed{$\hfill{\vrule height 3pt width 5pt depth 2pt}$}
\newfont{\bbold}{msbm10 scaled \magstep1}
\newfont{\bbolds}{msbm7 scaled \magstep1}
\newcommand{\ns}{\mathbb{N}}
\newcommand{\rs}{\mathbb{R}}
\newcommand{\cT}{\mathcal T}
\newcommand{\cP}{\mathcal P}
\newcommand{\cS}{\mathcal S}
\newcommand{\Sn}{\mathfrak S}
\newcommand{\beq}{\begin{equation}}
\newcommand{\eeq}{\end{equation}}
\newcommand{\gf}{generating function}
\def\emm#1,{{\em #1}}
\newcommand{\p}{permutation}
\newcommand{\ps}{permutations}
\newcommand{\si}{\sigma}
\newcommand{\rev}{{\rm{rev}}}
\newcommand{\mir}{{\rm{mir}}}
\newcommand{\BS}{\mathcal{B}}
\newcommand{\OS}{\mathcal{O}}
 \def\bpi{\overline{\pi}}
 \def\wh{\widehat}
 \def\cTm{\mathcal{T}_m}
\begin{document}
\title
[Baxter permutations and plane bipolar orientations]
{Baxter permutations and plane bipolar orientations}

\author[N. Bonichon]{Nicolas Bonichon}
\address{N. Bonichon: LaBRI, Universit\'e Bordeaux 1, 351 cours de la
  Lib\'eration, 33405 Talence, France}
\email{nicolas.bonichon@labri.fr}
\author[M. Bousquet-M\'elou]{Mireille Bousquet-M\'elou}
\address{M. Bousquet-M\'elou: CNRS, LaBRI, Universit\'e Bordeaux 1, 
351 cours de la Lib\'eration, 33405 Talence, France}   
\email{mireille.bousquet@labri.fr}
\author[\'E. Fusy]{\'Eric Fusy}
\address{\'E. Fusy: Dept. Mathematics, Simon Fraser University, 8888 University Drive, Burnaby, BC, V5A 1S6, Canada}
\email{eric.fusy@inria.fr}

\thanks{The three authors are supported by the project \emph{GeoComp}
  of the ACI Masses de Donn\'ees and  by the French ``Agence Nationale
de la Recherche'', project SADA ANR-05-BLAN-0372.}

\keywords{Baxter permutations, bipolar orientations}
\subjclass[2000]{Primary 05A15; Secondary 05C30}

\begin{abstract}
We present
a simple bijection between Baxter permutations of size $n$
and plane bipolar orientations with $n$ edges. This bijection
translates several classical parameters of permutations (number of
ascents, right-to-left maxima, left-to-right 
minima...) into natural parameters of plane bipolar orientations
(number of vertices, degree of the sink, degree of the
source...), and has remarkable symmetry properties.
By specializing it to Baxter \ps \ avoiding the pattern 2413, we
obtain a bijection with non-separable planar maps.
A further specialization yields a bijection between \ps\ avoiding 2413 and
3142 and  series-parallel maps. 
\end{abstract}
\maketitle

\date{\today}


\section{Introduction}
In 1964, Glen Baxter, in an analysis context, introduced a class of
\ps\ that now bear his name~\cite{Bax:64}. 
A permutation $\pi$ of the symmetric group  $\Sn_n$ is 
\emm Baxter, if one cannot find
$i<j<k$ such that $\pi(j+1)<\pi(i)< \pi(k) <\pi(j)$ or $\pi(j)<\pi(k)
<\pi(i)<\pi(j+1)$.
 These \ps\ were first enumerated around
 1980~\cite{CGHK:78,Mal:79,Vie:81,CDV:86}. More recently, they have
 been studied  in a slightly different perspective, in the general
 and very active framework 
of \emm pattern avoiding, \ps~\cite{mbm-motifs,DG:96,Gir:93,GL:00}.  In
particular, the number of Baxter \ps\ of $\Sn_n$ having   
 $m$ ascents, $i$ left-to-right maxima and $j$ right-to-left maxima
 is known to be~\cite{Mal:79}:
\beq\label{baxter-enum}
 \frac{ij}{n(n+1)} {n+1 \choose m+1}  
\left[  {n-i-1 \choose n-m-2}{ n-j-1 \choose m-1}-
{n-i-1 \choose n-m-1}{ n-j-1 \choose m}\right].
\eeq

A few years ago, another Baxter, the physicist \emm Rodney, Baxter,
studied the sum of the 
Tutte polynomials $T_M(x,y)$ of non-separable planar maps $M$ having a fixed
size~\cite{Bax:01}. 
  He  proved
that the coefficient of $x^1y^0$ in $T_M(x,y)$,  summed over all
rooted non-separable planar maps  $M$ having $n+1$ edges, $m+2$ vertices,
 root-face of degree $i+1$ and a root-vertex of degree $j+1$  was given
by~\eqref{baxter-enum}.  
He was unaware that these numbers had been
met before (and bear his name), and that
the coefficient of  $x^1y^0$ in $T_M(x,y)$ is the number of \emm bipolar
orientations, of $M$~\cite{greene-zaslavsky,gebhard-sagan}.
This number is also, up to a sign, the derivative of the chromatic
polynomial of $M$, evaluated at 1~\cite{lass-orientations}.

This is an amusing coincidence --- are all Baxters doomed to invent
independently  objects that are counted by the same numbers? --- which
was first noticed in~\cite{mbm-motifs}. It is the aim of this paper to
explain it via  a direct bijection between Baxter \ps\ and plane bipolar
orientations. Our bijection is simple to
implement, non-recursive, and has a lot of structure: it translates
many natural statistics of \ps\ into natural statistics of maps, and
behaves well with respect to symmetries. When restricted to Baxter
\ps\ avoiding the pattern 2413, it specializes into a
bijection between these \ps\ and rooted non-separable planar
maps, which is related
 to a recursive description on these maps by
Dulucq \emm et al,.~\cite{DGW:96}. 
The key of  the proofs, and (probably) the reason why this bijection
has so much structure,   is the existence of \emm two isomorphic
 generating trees, for Baxter \ps\ and plane orientations.

It is not the first time that intriguing connections arise between pattern
avoiding \ps\ and planar maps. For instance, several families
of \ps, including two-stack sortable \ps,  are equinumerous with
non-separable maps~\cite{west-stacks,zeil-stacks,DGW:96,claesson}. 
Connected 1342-avoiding permutations are equinumerous with bicubic planar
maps~\cite{bona}, and 54321-avoiding involutions are equinumerous with
tree-rooted
maps~\cite{mullin,gouyou-tableaux,bernardi,mbm-motifs}. Finally,
a bijection that sends plane bipolar orientations to 3-tuples of
non-intersecting paths appears in~\cite{FPS:07}. These configurations
of paths are known to be in one-to-one correspondence with Baxter
\ps~\cite{DG:96}. More  bijections
between these three families of objects are presented
in~\cite{felsner-baxter}. However, our construction is the first
direct bijection between Baxter \ps\ and plane bipolar orientations. Moreover,
it has interesting properties (symmetries, specializations...) that
the bijections one may obtain by 
composing the bijections of~\cite{felsner-baxter} do not
have.

\medskip
Let us finish this introduction with the outline of the paper. After
some preliminaries in Section~\ref{sec:preliminaries}, we
present our main results in Section~\ref{sec:main}: we describe the
bijection $\Phi$, its inverse $\Phi^{-1}$, and state some of its properties. In
particular, we explain how it transforms statistics on \ps\ into statistics on
orientations (Theorem~\ref{thm:phibiject}), and how it behaves with
respect to symmetry (Proposition~\ref{prop:symmetries}). 
In Section~\ref{sec:trees} we introduce two \emm generating trees,, which
respectively describe a recursive construction of
Baxter \ps\ and  plane bipolar orientations. We observe that these
trees are isomorphic, which means that Baxter \ps\ and orientations
have a closely related recursive structure. In particular, this
isomorphism implies the existence of a (recursively defined) canonical
bijection
$\Lambda$ between these two classes of objects, the properties of
which reflect the properties of the trees 
(Theorem~\ref{thm:phibiject-canon}). We prove in Section~\ref{sec:proofs} that
our construction $\Phi$ coincides with the canonical bijection
$\Lambda$, and then (Section~\ref{sec:proofs-inverse}) that our description of
$\Phi^{-1}$ is correct.  Section~\ref{sec:spe} is devoted to the
study of two interesting specializations of $\Phi$, and the final
section presents possible developments of this work.

\section{Preliminaries}\label{sec:preliminaries}

\begin{figure}[b!] 
\begin{center}
{\input{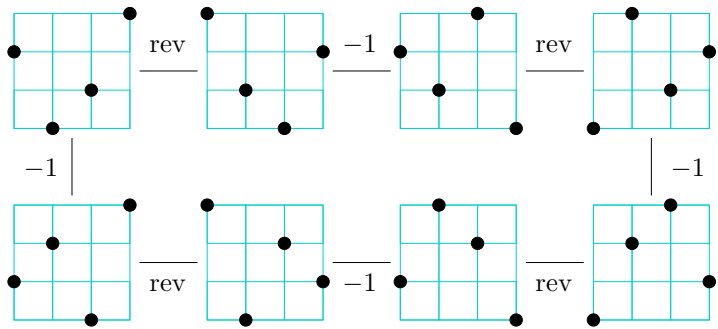}}
\caption{The symmetries of the square act on Baxter \ps.}
\label{fig:sym8-perm}
\end{center}
\end{figure}

\subsection{Baxter permutations}\label{sec:baxter}
A permutation $\pi=\pi(1)\cdots\pi(n)$ will be represented by its \emm
diagram,, that is, the set of points 
$(i,\pi(i))$. Hence  the group of symmetries of the square, of order 8,
acts on \ps\ of  $\Sn_n$. This group is generated (for instance) by
the following two involutions:
\begin{itemize}
  \item[--] the inversion $\pi \mapsto \pi^{-1}$, which amounts to a reflection
around the first diagonal,
 \item[--] the reversion of \ps, which maps $\pi=\pi(1) \ldots \pi(n)$ on
$\rev(\pi)=\pi(n)\ldots\pi(1)$. 
\end{itemize}
It is clear from the definition given at the beginning of the
introduction that the reversion leaves invariant the set $\BS_n$ of Baxter \ps\
of size $n$. It is also true that the inversion leaves $\BS_n$
invariant, so that $\BS_n$ is invariant
under the 8 symmetries of the square (Fig.~\ref{fig:sym8-perm}). 
The invariance of  $\BS_n$ by inversion follows from  an alternative
description of Baxter \ps\ in terms of \emm barred, patterns~\cite{Gir:93},
which we now describe.  Given two permutations $\pi$ and 
$\tau=\tau_1\cdots \tau_k$,
an \emm occurrence of the pattern, $\tau$ in $\pi$ is a
subsequence $\pi(i_1), \ldots, \pi(i_k)$, with $i_1 <\ldots <i_k$,
which is order-isomorphic to $\tau$. If no such occurrence exists,
then $\pi$ \emm avoids, $\tau$. Now $\pi$ avoids the \emm  barred,
pattern 
$\tau=\tau_1\cdots \tau_{i-1}\overline{\tau_i}\tau_{i+1}\cdots\tau_k$ 
if every occurrence of
$\tau_1\cdots \tau_{i-1}\tau_{i+1}\cdots\tau_k$ 
in $\pi$ is a sub-sequence of an occurrence of 
$\tau_1\cdots \tau_{i-1}\tau_i\tau_{i+1}\cdots\tau_k$.
Then Baxter \ps\ are exactly the $25\bar314$- and $41\bar3 52$-avoiding \ps.

We equip $\rs^2$ with the natural product order:
$v\leq w$ if $x(v)\le x(w)$ and $y(v)\le y(w)$, that is, if $w$ lies
to the North-East of $v$. 
Recall that for two elements $v$ and $w$ of poset $\cP$, $w$ \emm covers, $v$ if $v< w$
and there is no $u$ such that $v< u <w$. The \emm Hasse diagram,
of $\cP$ is the digraph having vertex set $\cP$ and edges
corresponding to the covering relation. We orient these edges from
the smaller to the larger element.

Given a \p\ $\pi$, a \emph{left-to-right maximum} (or:
\emm lr-maximum,) is a
value $\pi(i)$ such that $\pi(i) > \pi(j)$ for all $j<i$. One defines
similarly rl-maxima, lr-minima and rl-minima. 

\subsection{Plane bipolar orientations}\label{sec:orientations}
A \emm  planar map, is a connected graph  embedded in the plane
with no edge-crossings, considered  up to continuous deformation. A
map has vertices, edges, and \emm faces,, which are the connected
components of $\rs^2$ remaining after deleting the edges. The \emm
outer face, is unbounded, the \emm inner faces, are bounded. 
The map is \emm separable, if there exists a vertex whose deletion
disconnects the map.
A \emph{plane bipolar orientation} $O$ is an acyclic orientation of a planar
map $M$ with a unique \emph{source}  $s$ (vertex with no ingoing edge)
and a unique \emph{sink} $t$ (vertex with no outgoing edge), both on
the outer face (Fig.~\ref{fig:bip}). The \emm poles, of $O$ are $s$
and $t$. One of the oriented paths going
from $s$ to $t$ has the outer face on its right: this path is the \emm
right border, of $O$, and its length is the  \emm right 
outer degree, of $O$. The \emm left outer degree, is
defined similarly.

\begin{figure}[htb!] 
{\input{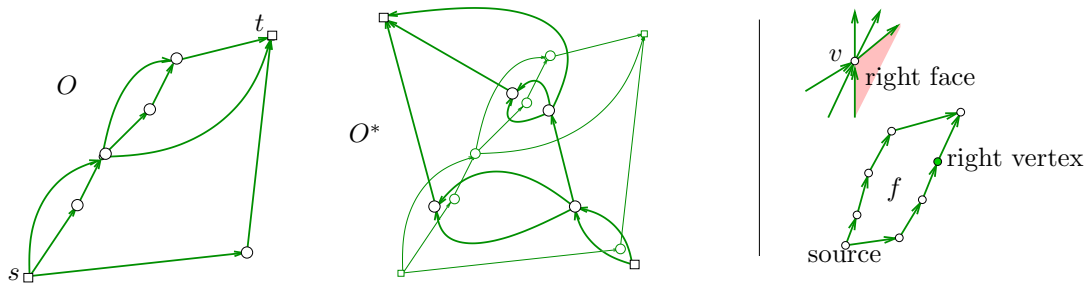}}
\caption{Left: A plane bipolar orientation, having right (left) outer degree
2 (3), and its dual. Right: Local properties of plane bipolar orientations.}
\label{fig:bip}
\end{figure}

It can be seen that around each non-polar vertex $v$ of a plane
bipolar orientation, 
the edges are \emm sorted, into two blocks, one block of ingoing edges
and one block of outgoing edges: around  $v$, one finds
a sequence of outgoing edges, then a sequence of ingoing edges, and
that's it (Fig.~\ref{fig:bip}, right). 
 The face that is incident  to the last outgoing edge and the first
 ingoing edge (taken in clockwise order) is called the \emph{right 
face} of $v$.
  Symmetrically, the face that is incident  to the last
ingoing edge and the first outgoing edge (still in  clockwise order)
is called the \emph{left  face} of $v$. 
Dually, 
the border of every finite face $f$ contains exactly two maximal oriented paths
(Fig.~\ref{fig:bip}, right), forming a small bipolar orientation. Its
source (resp.~sink) is  called the \emm source, (resp.~\emm sink,) of
$f$. The other vertices of the face are respectively called \emm
right, and \emm left,  vertices of $f$: if $v$ is a right vertex of
$f$, then $f$ is   the left face of $v$.

A map $M$ is \emm rooted, if one of the edges adjacent to the outer
face is oriented, in such a way the outer face lies on its right. In
this case, a bipolar orientation of $M$ is required to have source $s$
and sink $t$, where $s$ and $t$ are the endpoints of the root edge. As
recalled above, the 
number of such orientations of $M$ is the coefficient of $x^1y^0$ in the
Tutte polynomial $T_M(x,y)$ (this is actually true of any graph $G$
with a directed edge). This number is non-zero if and only
if $M$ is non-separable~\cite{lempel}.

A map $M$ is \emm bipolar, if two of its vertices, lying on the outer
face and respectively called the \emm source, ($s$)
and the \emm sink, ($t$) are distinguished. In
this case, a bipolar orientation of $M$ is required to have source $s$
and sink $t$. There is of course a
simple one-to-one correspondence between rooted maps and bipolar maps,
obtained by deleting the root edge and taking its starting point
(resp.~ending point) as the source (resp.~sink). It will be convenient
(and, we hope, intuitive) to use the following notation:  if $M$ is
rooted,  $\check M$ is the bipolar map obtained by deleting the root
edge. If $M$ is bipolar, $\hat M$ is the rooted map obtained by adding a
root-edge. In this case,
$M$ admits a bipolar orientation 
if and only if $\hat M$ is non-separable.

Two natural transformations act on the set $\OS_n$ of (unrooted) plane bipolar
orientations having $n$ edges.  For a plane bipolar orientation $O$,
we define $\mir(O)$ to be the \emm mirror image, of $O$, that is, the
orientation  obtained by flipping $O$ around any line
(Fig.~\ref{fig:sym8-map}).   Clearly, $\mir$ is an involution.

The other transformation is \emm duality.,
The  dual plane orientation $O^*$ of $O$ is constructed as
shown on Fig.~\ref{fig:bip}. There is a vertex of $O^*$ in each bounded
face of $O$, and \emm two, vertices of $O^*$ (its poles) in the outer
face of $O$.
The edges of $O^*$  connect  the vertices
corresponding to faces of $O$ that are adjacent to a common edge, and
are oriented using the convention shown in the figure. Note
that $(O^*)^*$ is obtained by changing all edge directions in $O$, so
that the duality is a transformation of order 4. The
transformations $\mir$ and $O\mapsto O^*$ generate a group of
order 8 (Fig.~\ref{fig:sym8-map}). We shall see that our
bijection $\Phi$ allows us to superimpose Figs.~\ref{fig:sym8-perm}
and~\ref{fig:sym8-map}. That is, if $\Phi(\pi)=O$, then
$\Phi(\pi^{-1})$ is $\mir(O)$ and $\Phi(\rev(\pi))$ is $\mir(O^*)$.

\begin{figure}[htb!]
\begin{center}
{\input{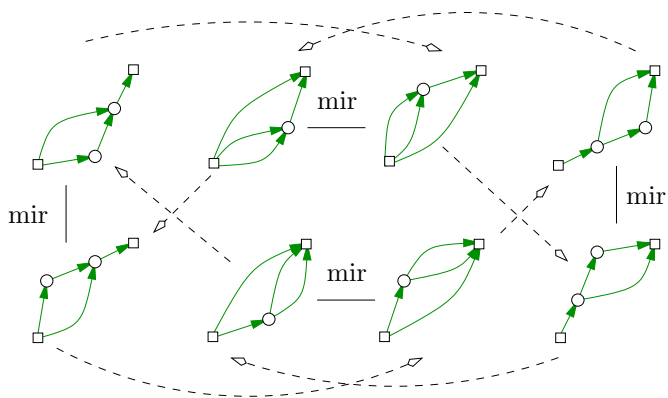}}
\caption{A group of order 8 acts on plane bipolar orientations. The
  dashed edges join an orientation to its dual,  the others join
  mirror images.} 
\label{fig:sym8-map}
\end{center}
\end{figure}

\section{Main results}\label{sec:main}
\subsection{From Baxter permutations to plane bipolar orientations}
\label{sec:bijection}
Let $\pi$ be a Baxter permutation. We first construct an embedded 
 digraph $\phi(\pi)$ with straight edges having black and white vertices.
The black vertices are the points $b_i=(i,\pi(i))$. The white vertices
correspond to the \emm ascents, of $\pi$. More precisely,
for each ascent $a$  (i.e., $\pi(a)<\pi(a+1)$), let 
$\ell_a=\max\{\pi(i):  i<a+1 \hbox{ and }  \pi(i)<\pi(a+1)\}$. The Baxter
property implies that for all $i > a$ such that $\pi(i)>\pi(a)$, one has
actually $\pi(i)>\ell_a$ (see Fig.~\ref{fig:saillant-rule}). Create
a white vertex  
$w_a=(a + 1/2 ,\ell_a + 1/2)$. Finally, add two more white vertices
$w_0=(1/2, 1/2)$ and $w_n=(n + 1/2, n + 1/2)$. 
(In other words, we consider that  $\pi(0) = 0$ and $\pi(n+1)= n+1$.)
We define the embedded digraph $\phi(\pi)$ to be the Hasse diagram of
this collection of black and white vertices for the product order on
$\rs^2$, drawn with straight edges (Fig.~\ref{fig:bijection}, middle).
Of course, all edges point to the North-East. 
We will prove in Section~\ref{sec:proofs} the following proposition.
\begin{Proposition}
 \label{prop:phi}
For all Baxter \p\ $ \pi$, the embedded graph $\phi(\pi)$ is  planar
(no edges cross),  bicolored
(every edge joins a black vertex and a white one), and every black vertex
has indegree and outdegree $1$.
\end{Proposition}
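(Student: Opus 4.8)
The plan is to determine, for each black vertex $b_i$, its unique cover from above and its unique cover from below, to show that both are white (this gives bicoloredness together with the degree statement), and then to deduce planarity from the precise form of these covers. Throughout I use the convention $\pi(0)=0$, $\pi(n+1)=n+1$, so that $w_0$ and $w_n$ are simply the white points attached to the extremal ascents $a=0$ and $a=n$; thus every white point is a $w_a=(a+\tfrac12,\ell_a+\tfrac12)$ indexed by an ascent $a$ of the extended word, with $\ell_a=\max\{\pi(k):k\le a,\ \pi(k)<\pi(a+1)\}$.

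First I would handle the out-edges. Given $b_i$, set $a+1=\min\{j>i:\pi(j)>\pi(i)\}$; one checks that $a$ is an ascent, that $a\ge i$ and $\ell_a\ge\pi(i)$, hence $w_a>b_i$. The key claim is that $w_a$ is not merely \emph{a} minimal element above $b_i$ but the \emph{minimum} of $\{v:v>b_i\}$. For a black $v=b_j>b_i$ this is exactly the Baxter property quoted before the proposition: $j\ge a+1>a$ and $\pi(j)>\pi(i)\ge\pi(a)$ force $\pi(j)>\ell_a$, so $b_j>w_a$. For a white $v=w_{a'}>b_i$ one first shows $a'\ge a$, and then the same Baxter property applied to $a$ gives $\pi(a'+1)>\ell_a$, so that $\ell_a$ is eligible in the maximum defining $\ell_{a'}$ and $\ell_{a'}\ge\ell_a$, i.e. $w_{a'}\ge w_a$. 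As the strict up-set of $b_i$ has a minimum, $w_a$ is its unique upper cover, so $b_i$ has out-degree $1$ with its out-edge going to a white vertex. The in-edges are treated symmetrically, taking $a=\max\{j<i:\pi(j)<\pi(i)\}$; here the \emph{second} forbidden pattern is used once, to see that $\ell_a<\pi(i)$ (a witness $k\le a$ with $\pi(i)\le\pi(k)<\pi(a+1)$ would make $(k,a,i)$ an occurrence of $\pi(a)<\pi(i)<\pi(k)<\pi(a+1)$). One then shows, as above, that $w_a$ is the maximum of the strict down-set of $b_i$, giving in-degree $1$ and a white in-neighbour. In particular no edge joins two black vertices, since a cover lying above or below a black vertex is white.

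The genuinely delicate point, and the place where the Baxter hypothesis is indispensable, is the absence of white–white edges. Two white points can be adjacent only if comparable, say $w_a<w_{a'}$ with $a<a'$; I would first show that if $\ell_a<\ell_{a'}$ then a black point (either $b_{a+1}$ or the point carrying the value $\ell_{a'}$) lies strictly between them, so they cannot form a cover. Hence a white–white edge would require $\ell_a=\ell_{a'}$; but two white points of equal (half-integer) height admit no black point strictly between them, so this can happen only if the map $a\mapsto\ell_a$ fails to be injective. Finally I would prove that $a\mapsto\ell_a$ \emph{is} injective: a collision $\ell_a=\ell_{a'}=L$ with $a<a'$ forces $a+1<a'$ and $\pi(a')<L<\pi(a'+1)<\pi(a+1)$; choosing the last $j\in[a+1,a')$ with $\pi(j)>\pi(a'+1)$, the definition of $\ell_{a'}$ (no value of $(L,\pi(a'+1))$ occurs at a position $\le a'$) yields $\pi(j+1)<L$, and then $(p_0,j,a'+1)$, where $\pi(p_0)=L$, is an occurrence of the forbidden pattern $\pi(j+1)<\pi(p_0)<\pi(a'+1)<\pi(j)$. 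Thus $\phi(\pi)$ is bicolored; as a by-product all points now have pairwise distinct abscissas and pairwise distinct ordinates.

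It remains to prove planarity, which I expect to follow cleanly from the \emph{minimum/maximum} nature of the covers (note that a straight-line Hasse diagram of a general planar dominance order can have crossing covers, so this extra structure is really needed). Since all coordinates are distinct, each edge is a segment strictly increasing in both coordinates; if two edges crossed at an interior point $p$, then writing each as $u\to v$ with $u<p<v$ one finds that the two lower endpoints are incomparable, the two upper endpoints are incomparable, and each lower endpoint lies below the other edge's upper endpoint. A short case analysis on the colours then contradicts a minimality or maximality statement in each case; for instance, if both edges are out-edges $b_i\to w$ and $b_j\to w'$, then $b_i<w'$ forces $w\le w'$ (as $w$ is the minimum above $b_i$), contradicting the incomparability of $w$ and $w'$; the remaining cases (two in-edges, or one of each) are identical, using the maximum property of down-covers together with the minimum property of up-covers. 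Hence no two edges cross and $\phi(\pi)$ is planar. The main obstacles are thus the two ``principal-filter'' statements — that the up-set of a black vertex has a minimum and its down-set a maximum — and the injectivity of $a\mapsto\ell_a$; these are exactly the three points at which the Baxter property enters, while planarity is then essentially formal.
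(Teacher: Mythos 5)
Your proof is correct, but it takes a genuinely different route from the paper's. The paper establishes this proposition in Section~\ref{sec:proofs} (Proposition~\ref{lem:phirealise}) by induction along the generating tree of Baxter permutations: writing $\pi=L_k(\sigma)$ or $\pi=R_k(\sigma)$, it describes how $\phi(\pi)$ is obtained from $\phi(\sigma)$ by stretching some edges, adding one black (and, for an $R$-insertion, one white) vertex, adding two edges and possibly redirecting others, and it checks that these local operations preserve planarity (via the $21\bar{3}54$ characterization of point sets with crossing-free Hasse diagrams), bicoloredness, and the degree condition. You instead argue globally and non-recursively: you identify the covers of each black vertex $b_i$ explicitly, showing that its strict up-set is a principal filter with white minimum $w_a$, where $a+1=\min\{j>i:\pi(j)>\pi(i)\}$, and dually that its strict down-set has a white maximum; you exclude white--white covers by proving $a\mapsto\ell_a$ injective and exhibiting a separating black point (either $b_{a+1}$ or the point of value $\ell_{a'}$) when $\ell_a<\ell_{a'}$; and you deduce planarity order-theoretically from this minimum/maximum cover structure. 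I verified the three invocations of the Baxter hypothesis (the quoted property for up-sets, the occurrence $\pi(a)<\pi(i)<\pi(k)<\pi(a+1)$ forcing $\ell_a<\pi(i)$, and the occurrence $(p_0,j,a'+1)$ in the injectivity argument), and they are all sound, including at the boundary ascents $a=0$ and $a=n$. What each approach buys: yours is self-contained, yields an explicit description of the edges of $\phi(\pi)$ (useful in itself, e.g.\ for implementing $\Phi$ directly), and needs neither the generating-tree machinery nor the cited Hasse-diagram planarity criterion; the paper's induction, by contrast, does double duty, since the same computation proves $\Phi=\Lambda$, which is what later delivers Theorem~\ref{thm:phibiject} and the duality part of Proposition~\ref{prop:symmetries} --- after your proof, those statements would still require separate arguments. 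One point you should make explicit: in your planarity step, the incomparability of the two lower (resp.\ upper) endpoints of crossing edges is not a consequence of the crossing geometry alone --- two north-east segments whose lower endpoints are comparable can perfectly well cross --- but follows from the Hasse property, since comparability of, say, $u_1<u_2$ together with $u_2<v_1$ would contradict the cover relation $u_1\lessdot v_1$; with that one-line justification added, the case analysis closes as you describe.
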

These properties can be
observed on the example of Fig.~\ref{fig:bijection}. 
Erasing all black 
vertices yields a plane bipolar orientation with 
source $w_0$ and sink $w_n$, which we define to be  $\Phi(\pi)$ (see
Fig.~\ref{fig:bijection}, right). 
Observe that  every point of the permutation gives rise to an edge of
 $\Phi(\pi)$: we will say that the point \emm corresponds, to the edge, and
vice-versa. We draw the attention of the reader on the fact that the
slightly vague word ``correspond'' has now a very precise meaning in
this paper.

\begin{figure}[htb!] 
\begin{center}
{\input{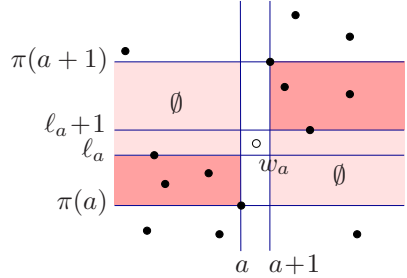}}
\caption{The insertion of white vertices in ascents.}
\label{fig:saillant-rule}
\end{center}
\end{figure}

\begin{figure}[htb!]
\begin{center}
 \scalebox{0.25}{\input{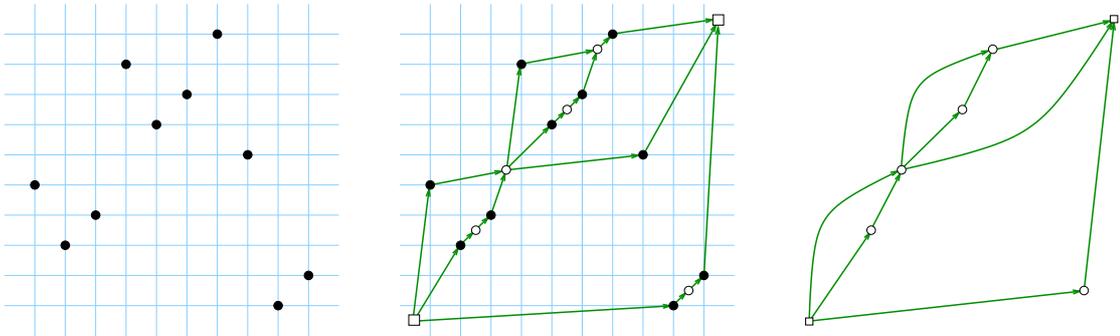}}
\caption{The Baxter permutation 
$\pi=  5\, 3\, 4\, 9\, 7\, 8\, 1\!0\, 6\, 1\, 2$,
  the Hasse diagram $\phi(\pi)$  and the plane bipolar
  orientation $\Phi(\pi)$.}
\label{fig:bijection}
\end{center}
\end{figure}

\begin{Theorem}\label{thm:phibiject}
The map $\Phi$ is a bijection between Baxter permutations and plane
bipolar orientations, 
which transforms standard parameters as  follows:
\begin{center}
 \begin{tabular}
{r@{\hspace{0.2cm}}c@{\hspace{0.2cm}}l@{\hspace{0.2cm}}c@{\hspace{0.4cm}}|@{\hspace{0.4cm}}c@{\hspace{0.2cm}}r@{\hspace{0.2cm}}c@{\hspace{0.2cm}}l}
 size  & $\leftrightarrow$ & \#  edges,&  
 & &\# ascents & $\leftrightarrow$ &  \#   non-polar vertices,\\
   \#   lr-maxima & $\leftrightarrow$ & left outer degree,&
 &&\#   rl-minima & $\leftrightarrow$ & right outer degree,\\
 \#   rl-maxima & $\leftrightarrow$ & degree of the sink,& 
 &&\#   lr-minima & $\leftrightarrow$ & degree of the source.
 \end{tabular}
 \end{center}
\end{Theorem}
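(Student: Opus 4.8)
The plan is to break the statement into three layers: that $\Phi(\pi)$ is a genuine plane bipolar orientation, that $\Phi$ is a bijection, and that it transports the six statistics. The first layer is essentially a corollary of Proposition~\ref{prop:phi}. Since all edges of $\phi(\pi)$ point North-East, the underlying digraph is acyclic. One checks that $w_0=(1/2,1/2)$ and $w_n=(n+1/2,n+1/2)$ are the global minimum and maximum of the point set for the product order on $\rs^2$, so they are the unique source and unique sink and both lie on the outer face; every other vertex, being neither minimum nor maximum, has at least one ingoing and one outgoing edge. Combined with Proposition~\ref{prop:phi} (each black vertex has in- and out-degree exactly $1$), erasing the black vertices merges each incident $2$-path into a single edge and leaves a plane bipolar orientation $\Phi(\pi)$.

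Two of the correspondences are then immediate. The map sending a black vertex $b_i$ to the edge obtained by erasing it is a bijection onto the edges of $\Phi(\pi)$ (bicoloredness forbids white-white edges, so every edge of $\Phi(\pi)$ comes from a unique white-black-white path), whence $\#\text{edges}=n=\text{size}$. The non-polar vertices of $\Phi(\pi)$ are exactly the white vertices $w_a$, one per ascent, so $\#\text{non-polar vertices}=\#\text{ascents}$.

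For the remaining four statistics I would translate each into a visibility statement about the point configuration. The edges entering the sink $w_n$ come from the black vertices $w_n$ covers, i.e. the points $b_i$ with no point strictly to their North-East; these are precisely the rl-maxima, giving $\deg(\text{sink})=\#\text{rl-maxima}$. Dually, the out-edges of $w_0$ reach the points with nothing to their South-West, the lr-minima, so $\deg(\text{source})=\#\text{lr-minima}$. For the outer degrees I would show that the left border, the $s$--$t$ path bounding the outer face on the left, passes exactly through the edges of the points visible from the North-West, namely the lr-maxima, and symmetrically that the right border corresponds to the rl-minima. The one point needing care throughout is the interaction with the ascent-vertices $w_a$: one must verify that no $w_a$ interposes itself in these covering relations, which is exactly what the definition $\ell_a=\max\{\pi(i): i<a+1,\ \pi(i)<\pi(a+1)\}$ together with the Baxter property (every later point exceeding $\pi(a)$ in fact exceeds $\ell_a$) is designed to guarantee.

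The main obstacle is bijectivity, and here I see two routes. The direct route is to exhibit $\Phi^{-1}$: an abstract plane bipolar orientation carries two canonical relations on its edge set, a \emph{vertical} one (two edges lie on a common directed path) and a \emph{horizontal} left-of one (for edges joined by no directed path), and every pair of edges is related by exactly one of them. Under $\Phi$ these correspond respectively to the South-West/North-East and North-West/South-East relative positions of the associated points, so from the two relations one reconstructs, for every pair of points, the relative order of abscissas and of ordinates, hence $\pi$; one then checks that $\pi$ is Baxter and that $\Phi$ returns the given orientation. The alternative, and I expect the more robust, route is the one foreshadowed in the introduction: build a generating tree for Baxter permutations (insertion at active sites, with the classical Baxter succession rule on labels $(i,j)$) and an isomorphic generating tree for plane bipolar orientations grown edge by edge, arranged so that the label pair is exactly the pair (left outer degree, right outer degree). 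An induction identifying $\Phi$ with the canonical bijection $\Lambda$ read off this isomorphism then yields bijectivity and delivers the two outer-degree correspondences as a byproduct. Either way, establishing the recursive or structural compatibility of the two families — and, in the direct route, verifying that the reconstructed permutation is Baxter and round-trips — is where essentially all the difficulty lies.
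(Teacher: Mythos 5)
Your plan is sound, and for the part that carries all the real weight---bijectivity---you end up recommending exactly the paper's route: a generating tree for Baxter permutations (insert the new maximum before an lr-maximum or after an rl-maximum) and one for plane bipolar orientations (edge insertion at the sink or from the left border), both isomorphic to the tree~\eqref{gtij}, followed by an induction identifying $\Phi$ with the canonical bijection $\Lambda$; that induction is the paper's Proposition~\ref{lem:phirealise}, which simultaneously proves Proposition~\ref{prop:phi}, and it is indeed where essentially all the work lies. Where you genuinely diverge is in the treatment of the statistics: the paper derives all six from the tree isomorphism, by refining the labels to $(i,j;m,k,\ell)$ (lr-maxima, rl-maxima, ascents, lr-minima, rl-minima) and checking that permutations and orientations obey the same refined rewriting rule, whereas you extract them by direct covering/visibility arguments on the Hasse diagram (rl-maxima are the points covered by $w_n$, lr-minima the points covering $w_0$, lr-maxima the left-border edges, and so on). Those geometric arguments are correct---the verification you flag, that no ascent vertex $w_a$ interposes in these covering relations, does go through since $\ell_a\ge\pi(a)$ and $\ell_a$ is itself a value of $\pi$---and they are arguably more illuminating than the paper's bookkeeping; on the other hand, the refined-label approach costs nothing once the induction is done anyway, and it is also what makes the symmetry $\Lambda(\rev(\pi))=\mir(O^*)$ of Theorem~\ref{thm:phibiject-canon} fall out for free. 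Two cautions. First, your label pair for the orientation tree must be (left outer degree, degree of the sink), not (left outer degree, right outer degree): the second label has to count the type-R insertion sites, which are the edges incident to the sink, and with your pair the succession rule would not match~\eqref{gtij}. Second, your ``direct route'' to bijectivity is essentially the paper's inverse construction $\Psi$ of Section~\ref{sec:inverse}, but note that the paper only proves $\Psi\circ\Phi=\mathrm{id}$ \emph{after} bijectivity is known from the trees; proving directly that the reconstructed permutation is Baxter and round-trips is harder than it looks, so your instinct that the generating-tree route is the more robust one is right.
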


By Euler's formula, the number of inner
faces of $\Phi(\pi)$ is the number
of descents of $\pi$.  This theorem is proved in Section~\ref{sec:proofs}.
 It explains why the numbers~\eqref{baxter-enum} appear both in the
enumeration of Baxter \ps~\cite{Mal:79} and in the enumeration of bipolar
orientations~\cite{Bax:01}.

\subsection{The inverse bijection}\label{sec:inverse}

Let $O$ be a plane bipolar orientation, with  vertices colored white.
Let  $O'$ be  the bicolored oriented map obtained 
by inserting a black vertex in the middle of each edge of $O$
(Fig.~\ref{fig:inverse}, left). 
Recall how the in- and out-going edges are organized around the
vertices of $O$ (Fig.~\ref{fig:bip}, right). 
Let $T_x$ be  obtained from $O'$ by retaining, at each white vertex, 
only the first incoming edge 
in clockwise order
(at the sink, we only retain the incoming edge lying on the right border). 
All vertices of $T_x$, except from the source  of
$O$, have now  indegree one, and can be reached from the source. 
 Hence $T_x$ is a plane tree rooted at the source. 

Similarly, let $T_y$ be  obtained from $O'$ by 
retaining, at each white vertex,  
the last incoming edge in clockwise order (at the sink, we only
retain the incoming edge lying on the left border). 
Then $T_y$ is also  a spanning tree of  $O'$, rooted at the source of $O$. 

Label the black vertices of $T_x$ in prefix order, walking around
$T_x$ clockwise.  Label  the black vertices of $T_y$ in prefix order, but
walking around $T_y$ counterclockwise.
For every black vertex $v$ of $O'$, create a point
at coordinates $(x(v), y(v))$, where $x(v)$ and $y(v)$ are the labels
of $v$ in $T_x$ and $T_y$ respectively. Let $\Psi(O)$ be the collection
of points thus obtained (one per edge of $O$).

\begin{Theorem}
 For every plane bipolar orientation $O$, the set of points $\Psi(O)$
 is the diagram of the Baxter \p\ $\Phi^{-1}(O)$.
\end{Theorem}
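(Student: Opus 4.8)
The plan is to prove that $\Psi$ inverts $\Phi$, i.e.\ that for every Baxter permutation $\pi$ the point set $\Psi(\Phi(\pi))$ is exactly the diagram $\{(i,\pi(i))\}$ of $\pi$. Since $\Phi$ is already known to be a bijection (Theorem~\ref{thm:phibiject}), establishing $\Psi\circ\Phi=\mathrm{id}$ on Baxter permutations suffices to identify $\Psi$ with $\Phi^{-1}$ on all of $\OS_n$.

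First I would set $O=\Phi(\pi)$ and identify the auxiliary map $O'$ of the $\Psi$-construction with the Hasse diagram $\phi(\pi)$. Indeed $O$ is obtained from $\phi(\pi)$ by deleting its black vertices, each of which (by Proposition~\ref{prop:phi}) has indegree and outdegree $1$; reinserting a black vertex in the middle of every edge of $O$ therefore recovers $\phi(\pi)$ together with its embedding and all the cyclic edge-orders around the white vertices. Under this identification the black vertex of $O'$ carried by the edge corresponding to the point $b_i$ is $b_i=(i,\pi(i))$ itself, and the selection rules defining $T_x$ and $T_y$ are computed inside $\phi(\pi)$. The theorem is thus equivalent to the two statements: the clockwise prefix labeling on $T_x$ assigns to $b_i$ the label $i$, and the counterclockwise prefix labeling on $T_y$ assigns to $b_i$ the label $\pi(i)$.

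These two statements are exchanged by the reflection across the first diagonal. That reflection sends $\phi(\pi)$ to $\phi(\pi^{-1})$ (equivalently $\mir\,O=\Phi(\pi^{-1})$, by Proposition~\ref{prop:symmetries}), swaps the two coordinates, reverses the sense of rotation, and hence turns the rule ``first incoming edge clockwise'' into ``last incoming edge clockwise'' and the right border into the left border; so it carries $T_x$ to $T_y$ and clockwise prefix order to counterclockwise prefix order. Consequently the $T_y$-statement for $\pi$ is the $T_x$-statement for $\pi^{-1}$, and it is enough to prove the single claim: \emph{the clockwise prefix traversal of $T_x$ visits the black vertices of $\phi(\pi)$ in increasing order of abscissa.}

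For this claim I would argue that the clockwise preorder of $T_x$ is order-isomorphic to the abscissa order. Every edge of $\phi(\pi)$ points to the North-East, so all descendants of a vertex lie strictly to its North-East; in particular the root of any subtree has the smallest abscissa in that subtree, which makes each vertex precede its descendants both in preorder and in abscissa. The remaining, and main, difficulty is the cross-sibling comparison: at a white vertex $z$ the children in $T_x$ are its out-neighbours, which form a decreasing antichain, so their clockwise order is already their abscissa order; one must show that this separation persists through the whole subtrees, i.e.\ that every black vertex below an earlier child has smaller abscissa than every black vertex below a later child. This is exactly where the Baxter property and the precise placement of the white vertices $w_a=(a+\tfrac12,\ell_a+\tfrac12)$ must be used: generic planarity and North-East monotonicity alone do not force it, since the subtree of an earlier child could a priori reach far to the right. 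I would prove the persistence by induction on the tree, showing that the retention of the first clockwise incoming edge at each white vertex, combined with the defining inequality for $\ell_a$ guaranteed by the Baxter condition, prevents a later subtree from ever being overtaken in abscissa by an earlier one. Once this abscissa-monotonicity of the clockwise preorder is established, the preorder labels are forced to be $x(b_i)=i$; the symmetry of the previous paragraph then yields $y(b_i)=\pi(i)$, so $\Psi(\Phi(\pi))$ is the diagram of $\pi$ and $\Psi=\Phi^{-1}$.
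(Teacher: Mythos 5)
Your framework coincides with the paper's: reduce the theorem to showing $\Psi(\Phi(\pi))=\pi$ (legitimate, since $\Phi$ is already known to be bijective), identify the subdivided map $O'$ with the embedded Hasse diagram $\phi(\pi)$, and use the first-diagonal symmetry to convert the $T_y$/ordinate claim into the $T_x$/abscissa claim for $\pi^{-1}$. All of that matches the paper and is sound. The genuine gap is at the step you yourself flag as the main difficulty, the cross-subtree comparison. Having correctly observed that planarity and North-East monotonicity alone do not force the subtree of an earlier child to stay to the left of the subtree of a later child, you replace the needed argument by a declaration of intent (``I would prove the persistence by induction on the tree, showing that\dots''): no inductive hypothesis is formulated, and no mechanism is exhibited by which the first-clockwise-incoming-edge rule or the inequality defining $\ell_a$ prevents the overtaking. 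Since this separation property \emph{is} the substance of the theorem, your proposal stops exactly where the proof has to begin.

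For comparison, the paper closes this step with a device absent from your plan, and with a weaker (but sufficient) statement. Because all edges of $\phi(\pi)$ point North-East, the edge retained at each white vertex (its first incoming edge in clockwise order) is its \emph{steepest} incoming edge; consequently the $T_x$-path from any vertex down to any of its ancestors is the greedy steepest down-path in $\phi(\pi)$. It then suffices to compare \emph{consecutive} black vertices $u,v$ in the prefix order, rather than whole subtrees, since two total orders on a finite set coincide as soon as consecutiveness in one implies increase in the other. The only nontrivial case is $u$ a leaf and $v$ the first child, clockwise-after the branch containing $u$, of the nearest branching ancestor $w$. If $v$ were to the left of $u$, it would also lie below $u$ (the out-neighbours of $w$ form a decreasing antichain), so the Hasse-diagram property of $\phi(\pi)$ would yield a down-path from $u$ to $v$; appending the edge $(w,v)$ gives a down-path from $u$ to $w$ that contradicts the steepest-path property of the $T_x$-branch. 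Note that this argument never re-invokes the Baxter condition or the defining inequality of $\ell_a$: those were already spent in establishing Proposition~\ref{prop:phi}. Your insistence that they must be re-deployed at this stage suggests the induction you have in mind is not the right mechanism; the missing idea is the steepest-path characterization of $T_x$ combined with the Hasse property.
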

 This can be checked on the examples of 
Figs.~\ref{fig:bijection}--\ref{fig:inverse}, and is proved in
Section~\ref{sec:proofs-inverse}. 
The construction $\Psi$ is closely related to an algorithm 
introduced in~\cite{DBTT:92} to draw  planar orientations on a grid.

\begin{figure}[htb!]
\begin{center}
 \scalebox{0.6}{\input{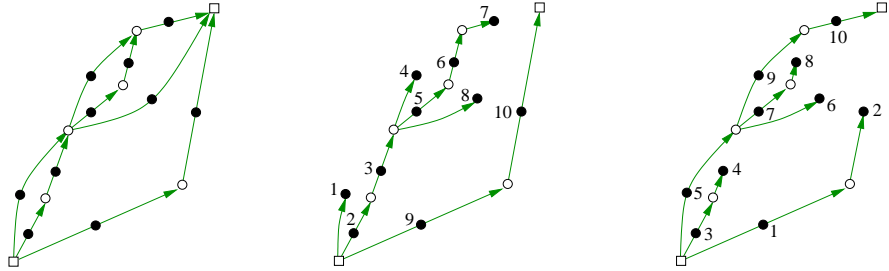}}
\caption{A bipolar orientation $O$, and  the trees  $T_x$ and $T_y$ used
  to compute the coordinates of the points of $\Psi(O)$.}
\label{fig:inverse}
\end{center}
\end{figure}

\subsection{Symmetries and specializations}\label{sec:sp-annonce}
\subsubsection{Symmetries}
We now describe how  the symmetries of the square, which act in a natural
way on Baxter \ps, are transformed through our bijection.

\begin{figure}[htb!]
\begin{center}
 \scalebox{1.2}{\input{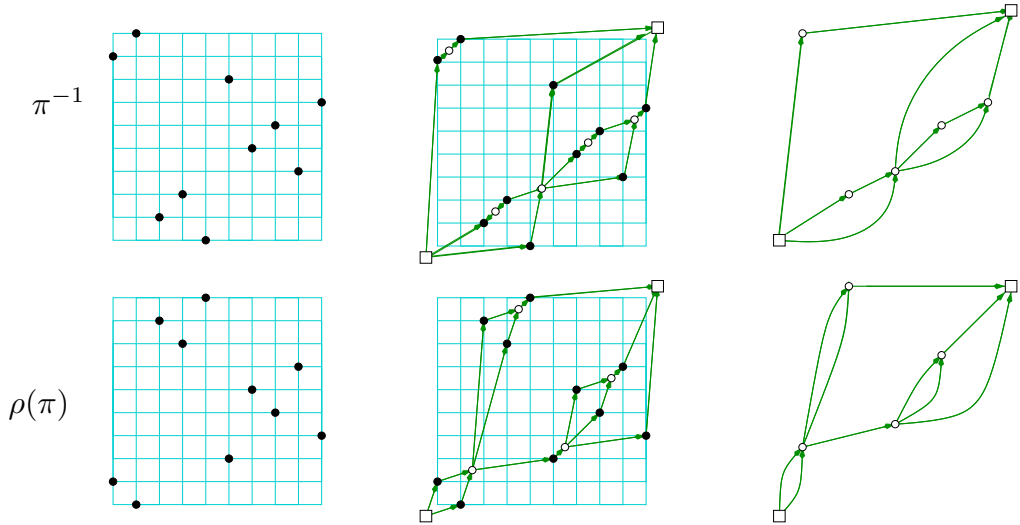}}
\caption{
The Hasse diagram and the bipolar orientations associated to
  $\pi^{-1}$ and $\rho(\pi)$, where 
$\pi= 5\, 3\, 4\, 9\, 7\, 8\, 10\, 6\, 1\, 2$
is the Baxter \p\ of Fig.~\ref{fig:bijection}.}
\label{fig:symmetries}
\end{center}
\end{figure}
 \begin{Proposition}\label{prop:symmetries}
 Let $\pi$ be a Baxter \p\ and  $O=\Phi(\pi)$. Then
 $$
  \Phi(\pi^{-1})=\mir(O) \quad \hbox{and} \quad
 \Phi(\rev(\pi))=\mir(O^*).
 $$
By combining both properties, this implies
 $$
O^*=\Phi(\rho(\pi)),
$$
  where $\rho $ is the clockwise rotation by $90$ degrees.

Moreover, if the point $p=(i, \pi(i))$ of $\pi$ corresponds (via the
bijection $\Phi)$ to the edge $e$ of
$O$, then the point $(\pi(i),i)$ of $\pi^{-1}$ corresponds to the edge
$\mir(e)$ in $\mir(O)$, and the point $\rho(p)$ in $\rho(\pi)$
corresponds to the dual edge  of $e$ in $O^*$.
 \end{Proposition}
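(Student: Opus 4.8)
The plan is to read $\phi(\pi)$ for what it literally is: the Hasse diagram, for the product order on $\rs^2$, of the point set $P(\pi)=\{b_i\}\cup W(\pi)$, where $W(\pi)$ denotes the set of white vertices. Since $\BS_n$ is invariant under the eight symmetries, $\pi^{-1},\rev\pi,\rho\pi$ are all Baxter and $\Phi$ is defined on them. The map $\tau\colon(x,y)\mapsto(y,x)$ is an automorphism of the poset $(\rs^2,\le)$, hence carries the Hasse diagram of $P(\pi)$ onto that of $\tau(P(\pi))$, preserving incidences and (since it preserves $\le$) edge directions. As $\tau$ sends $b_i=(i,\pi(i))$ to $(\pi(i),i)$, which is exactly the black vertex of $\pi^{-1}$ coding the point of abscissa $\pi(i)$, the black parts already match. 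Thus the whole inversion statement reduces to the single claim
\[
\tau\bigl(W(\pi)\bigr)=W(\pi^{-1}).
\]
Granting it, $\tau(\phi(\pi))=\phi(\pi^{-1})$ as embedded bicolored digraphs; erasing the black vertices gives $\tau(O)=\Phi(\pi^{-1})$, and because $\tau$ fixes the two poles $w_0,w_n$ (both on the diagonal) while reversing the orientation of the plane, $\tau(O)$ is precisely $\mir(O)$. The point--edge statement is then automatic: $b_i$, i.e.\ the edge $e$, is sent to the black vertex $(\pi(i),i)$ of $\pi^{-1}$, i.e.\ to the edge $\mir(e)$.

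The heart of the inversion part, and the first place I expect real work, is the lemma $\tau(W(\pi))=W(\pi^{-1})$. By the involution $\tau^2=\mathrm{id}$ it suffices to prove the inclusion $\tau(W(\pi))\subseteq W(\pi^{-1})$; applying it to $\pi^{-1}$ and then $\tau$ yields the reverse inclusion (and, incidentally, that $\pi$ and $\pi^{-1}$ have equally many ascents). Concretely I would show that $w_a=(a+\tfrac12,\ell_a+\tfrac12)$ is sent by $\tau$ to the white vertex of $\pi^{-1}$ attached to the ascent in position $\ell_a$: first, $\pi^{-1}(\ell_a)<\pi^{-1}(\ell_a+1)$, because any value strictly between $\ell_a$ and $\pi(a+1)$ must, by maximality of $\ell_a$, occur to the right of column $a$; second, the left-max $\ell'_{\ell_a}$ computed for $\pi^{-1}$ equals $a$, the inequality $\ge a$ coming from the point $(a,\pi(a))$ and the inequality $\le a$ being exactly where the Baxter property enters, via the consequence quoted after the definition of $\ell_a$ (no $i>a$ has $\pi(a)<\pi(i)\le\ell_a$). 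A cleaner route I would try first is to give a manifestly $\tau$-symmetric description of $W(\pi)$ — a half-integer point is a white vertex iff it is the common corner of an empty ``north-west'' strip and an empty ``south-east'' strip of black points, the two regions marked $\emptyset$ in Fig.~\ref{fig:saillant-rule} — which makes the lemma transparent and shifts the work onto proving that this description really characterises $W(\pi)$, again using both barred patterns.

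For the rotation $\rho\colon(x,y)\mapsto(y,n+1-x)$ the order-equivariance trick is unavailable, since $\rho$ is neither an automorphism nor an anti-automorphism of the product order; this reflects the fact that rotation corresponds to \emph{duality}, which genuinely exchanges vertices and faces. I would prove $\Phi(\rho\pi)=O^{*}$ directly. The ascents of $\rho\pi$ correspond to the positions $v$ with $\pi^{-1}(v)>\pi^{-1}(v+1)$, which localise the inner faces of $O$, i.e.\ the non-polar vertices of $O^{*}$; so the plan is to build an explicit dictionary sending the white vertices of $\phi(\rho\pi)$ (with $w_0,w_n$) to the faces of $O$, and sending each black vertex of $\rho\pi$ — which still codes a point $(i,\pi(i))$, hence an edge $e_i$ of $O$ — to the dual edge $e_i^{*}$. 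The key local verification, and the main obstacle of the whole proposition, is that the in- and out-neighbours of $\rho(b_i)$ in $\phi(\rho\pi)$ are precisely the dual vertices of the right and left faces flanking $e_i$, oriented as in Fig.~\ref{fig:bip}. This amounts to matching the two faces incident to an edge of $O$ with the two white vertices straddling the corresponding black point in the rotated diagram; once these incidences are checked, $\Phi(\rho\pi)=O^{*}$ and the statement that $\rho(p)$ corresponds to the dual edge of $e$ both follow.

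Finally, the reversion statement needs no new argument. Since $\rho={}^{-1}\circ\rev$ on diagrams, one has $\rev\pi=(\rho\pi)^{-1}$, so applying the already-established inversion identity to the Baxter \p\ $\rho\pi$ gives
\[
\Phi(\rev\pi)=\Phi\bigl((\rho\pi)^{-1}\bigr)=\mir\bigl(\Phi(\rho\pi)\bigr)=\mir\bigl(O^{*}\bigr),
\]
which is the second displayed identity, while $O^{*}=\Phi(\rho\pi)$ is exactly what the duality step produced; the corresponding point--edge correspondence follows by composing the inversion and duality correspondences. Thus everything rests on the two matching lemmas — white vertices under $\tau$, and white vertices versus faces under duality — the second of which I regard as the principal difficulty.
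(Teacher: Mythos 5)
Your treatment of the inversion identity is essentially the paper's own argument (the diagonal reflection $\tau$ is a poset automorphism, so everything reduces to matching the white vertices, which is where the Baxter property enters), and your observation that, once inversion is known, the reversion and rotation identities are equivalent to each other (via $\rev\pi=(\rho\pi)^{-1}$) is correct --- it is exactly the paper's logic run backwards: the paper proves $\Phi(\rev(\pi))=\mir(O^*)$ and \emph{deduces} $O^*=\Phi(\rho(\pi))$, whereas you propose to prove the rotation identity and deduce reversion. The problem is that in your scheme the rotation identity carries the entire weight of the proposition, and for it you give no proof: you only name the verification that would be needed (``build an explicit dictionary sending the white vertices of $\phi(\rho\pi)$ to the faces of $O$, and each black vertex to the dual edge'') and you yourself flag it as ``the main obstacle'' and ``the principal difficulty''. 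That is precisely the non-trivial content the paper warns about (``The second property is non-trivial, and will be proved in Section~\ref{sec:proofs}''), and the paper expends substantial machinery on it: two generating trees $\cT_b$ and $\cT_o$, the swap-of-$L$/$R$ symmetries (Propositions~\ref{prop:reverse} and~\ref{prop:pol-*}), the canonical bijection $\Lambda$ (Theorem~\ref{thm:phibiject-canon}), and the induction showing $\Phi=\Lambda$ (Proposition~\ref{lem:phirealise}). Nothing in your proposal substitutes for that work.

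Moreover, the deferred ``key local verification'' is not merely tedious but structurally delicate: covering relations in a Hasse diagram are not local (whether $u$ covers $v$ depends on all points of the diagram lying between them), so determining the in- and out-neighbours of $\rho(b_i)$ in $\phi(\rho\pi)$ presupposes control of the whole rotated diagram --- which is exactly what is in question; even the claim that the white vertices of $\phi(\rho\pi)$ are in canonical bijection with the inner faces of $O$ is part of the conclusion, not something available in advance. A smaller remark on the part you did argue: your claim that $\ell'_{\ell_a}\le a$ follows ``exactly'' from the quoted consequence of the Baxter property is not quite right --- that consequence only excludes values in $(\pi(a),\ell_a]$ from appearing after position $a$; one must separately rule out a value $v<\pi(a)$ sitting between position $a+1$ and the position of $\ell_a+1$, which requires a further application of the Baxter condition (a first-descent argument producing a forbidden $2\,5\,1\,3$ configuration). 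That point is repairable; the missing proof of $\Phi(\rho\pi)=O^*$ is the genuine gap.
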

  The  symmetry properties dealing with the inverse and the rotation
 are illustrated in Fig.~\ref{fig:symmetries}.
 The first one  is easily proved from the definition of
  $\phi$ and $\Phi$. Indeed, it is clear from
  Fig.~\ref{fig:saillant-rule}  that $\pi$ has 
  an ascent at $a$ if and only if $\pi^{-1}$ has a ascent 
  at $\ell_a$, and that the white dots
  of the embedded graph 
  $\phi(\pi^{-1})$  are the symmetric of the white dots of $\phi(\pi)$
  with respect to the first diagonal. Of course this holds for black
  vertices as well. Hence the Hasse diagram $\phi(\pi^{-1})$ is
  obtained by flipping the diagram $\phi(\pi)$ around the first
  diagonal. The first property  follows, as well as the correspondence
  between the point $(\pi(i),i)$ and the edge $\mir(e)$.

The second property is non-trivial, and will be proved in
Section~\ref{sec:proofs}.

\subsubsection{Specializations}
%
As recalled in Section~\ref{sec:preliminaries}, a bipolar map $M$ has
a bipolar orientation 
if and only if the rooted
map $\hat M$ is non-separable. 

Take  a Baxter \p\ $\pi$ and the bipolar orientation $\Phi(\pi)$, with
poles $s$ and $t$. Let
$ M$ be the underlying bipolar map, and  define $\widehat
\Phi(\pi)$ to be the rooted non-separable $\hat M$.
We still call $s$ and $t$ the
source and the sink of the rooted map. It is not hard to see that
different \ps\ may give the same map. However,

\begin{Proposition}\label{prop:bijection_maps}
  The restriction of $\wh\Phi$ to Baxter \ps\  avoiding
  the pattern $2413$ (that is, to permutations avoiding $2413$ and
  $41\bar{3}52$) is a bijection between these \ps\  and rooted non-separable
  planar maps, which transforms standard parameters as  follows:
  \begin{itemize}
  \item[--] if $\pi$ has length $n$, $m$ ascents,  $i$ lr-maxima, $j$
    rl-maxima,   $k$ lr-minima and $\ell$ rl-minima, 
  \item[--]  then $\wh\Phi(\pi)$ has $n+1$
  edges, $m$ non-polar vertices, a sink of degree $j+1$, a source of
  degree $k+1$ and the face that lies to the right (resp.~left) of
    the root edge has   degree $i+1$ (resp.~$\ell+1$).
 \end{itemize}
\end{Proposition}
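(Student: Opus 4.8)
The plan is to work entirely on the orientation side, using the bijection $\Phi$ of Theorem~\ref{thm:phibiject}. The point is that $\widehat\Phi(\pi)$ depends only on the \emph{underlying} (unoriented) rooted map of the plane bipolar orientation $\Phi(\pi)$, while distinct bipolar orientations of one fixed plane bipolar map $M$ correspond, through $\Phi^{-1}$, to distinct Baxter permutations. Hence $\widehat\Phi$ is constant exactly on these fibres, and the statement reduces to two facts: (a) a rooted map $\hat M$ lies in the image of $\widehat\Phi$ if and only if $M$ carries a bipolar orientation, which by the equivalence recalled in Section~\ref{sec:preliminaries} happens exactly when $\hat M$ is non-separable; and (b) among all the bipolar orientations of a fixed plane bipolar map $M$, \emph{exactly one} has its associated permutation $\Phi^{-1}(O)$ avoiding $2413$. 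Fact (a) is immediate from Theorem~\ref{thm:phibiject} (which makes $\Phi$ onto all bipolar orientations) together with the non-separability criterion, so the whole content is concentrated in (b).

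For (b) I would invoke the distributive lattice structure carried by the set of bipolar orientations of a fixed plane map (de Fraysseix, Ossona de Mendez and Rosenstiehl): two orientations are joined by an elementary \emph{flip} that reverses a single inner edge bounding two faces whose reversal preserves acyclicity and the pole structure, and the resulting Hasse diagram has a unique minimum and a unique maximum. The key lemma to establish is a \emph{local dictionary}: an inner edge of $O$ is flippable ``downward'' precisely when the four points/edges framing it realise the pattern $2413$ in $\pi=\Phi^{-1}(O)$, the flip replacing that occurrence by $3142$. Concretely I would take a minimal would-be occurrence $\pi(i_3)<\pi(i_1)<\pi(i_4)<\pi(i_2)$ with $i_1<i_2<i_3<i_4$, follow the four corresponding edges through the construction of $\phi$ and $\Phi$, and verify that they surround an inner face whose bounding edge can be reversed without altering the underlying map; conversely, a downward-flippable edge feeds back, via the description of $\Phi^{-1}$ through $\Psi$ (Section~\ref{sec:inverse}), a $2413$ occurrence. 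Granting this, $\pi$ avoids $2413$ iff no downward flip is available, i.e.\ iff $O$ is the minimal element of its lattice; since every non-empty distributive lattice has a unique minimum, (b) follows. As a consistency check, avoiding \emph{both} $2413$ and $3142$ forces the lattice to a single point, i.e.\ $M$ carries a unique bipolar orientation --- these are exactly the series-parallel maps --- which matches the further specialization announced in the introduction.

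The main obstacle is precisely this local dictionary. One must show that a single-edge flip corresponds on the permutation side to exactly one tidy local swap of the relative order of four points, and, crucially, that \emph{every} $2413$ occurrence (not merely the framed ones) witnesses a flippable edge and vice versa, so that ``$O$ minimal'' and ``$\pi$ avoids $2413$'' are genuinely equivalent rather than merely related. I expect the cleanest way to control both directions is to read them off the explicit maps: the construction of $\Phi$ records which edge each point of $\pi$ produces, while $\Psi$ (the grid drawing through the trees $T_x,T_y$) shows how reversing a single edge perturbs the two coordinate labels, hence flips the relative position of precisely four points.

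The parameter correspondences are then bookkeeping on top of Theorem~\ref{thm:phibiject}. Writing $O=\Phi(\pi)$, that theorem gives $n$ edges, $m$ non-polar vertices, a source of degree $k$, a sink of degree $j$, left outer degree $i$ and right outer degree $\ell$. Passing from the bipolar map $M$ to the rooted map $\hat M=\widehat\Phi(\pi)$ adds one edge from $s$ to $t$, drawn in the outer face with that face on its right. This raises the edge count to $n+1$, the source degree to $k+1$ and the sink degree to $j+1$, leaves the $m$ non-polar vertices untouched, and splits the outer face of $O$ into two faces incident to the root edge: the one lying to its right is bounded by the root edge together with the left border (of length $i$), hence has degree $i+1$, while the one lying to its left is bounded by the root edge together with the right border (of length $\ell$), hence has degree $\ell+1$. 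This is exactly the announced correspondence.
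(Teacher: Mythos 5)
Your overall architecture is the same as the paper's: reduce the proposition to showing that each bipolar map carries exactly one bipolar orientation whose associated permutation avoids $2413$, invoke the distributive-lattice structure on the bipolar orientations of a fixed map (Proposition~\ref{fact:ROP}, due to Ossona de Mendez), and finish with the root-edge bookkeeping --- which you carry out correctly, including the degrees of the faces left and right of the root edge. But the entire mathematical content of the proposition sits in what you call the ``local dictionary'', and you do not prove it: you announce it as ``the key lemma to establish'', describe what you \emph{would} do, and flag it yourself as ``the main obstacle''. This equivalence --- $\pi$ contains $2413$ if and only if $O=\Phi(\pi)$ fails to be the minimal element of its lattice --- is exactly what the paper's Proposition~\ref{lem:ROP} establishes in the static form ``$\pi$ contains $2413$ iff $O$ contains a ROP'', and its proof there is a substantial argument: one direction builds a $2413$ occurrence from the four edges of a ROP by comparing paths in the trees $T_x$ and $T_y$; the converse takes a \emph{minimal} occurrence of $2413$, analyzes the Hasse diagram $\phi(\pi)$ around it, and crucially passes to the rotated permutation $\rho(\pi)$ and the dual orientation (via Proposition~\ref{prop:symmetries}) to locate the two faces of the ROP. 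Nothing in your proposal substitutes for this work, so the proof is incomplete at its central step.

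Moreover, the dynamic formulation you choose makes the missing lemma harder, and is partly wrong as stated. The cover relations in the lattice of bipolar orientations are not reversals of a single inner edge: an elementary flip reverses the whole common boundary path of the two faces of a ROP, a path that may have any length (its internal vertices have degree $2$, so the reversal is always legal). Correspondingly, your claim that a flip perturbs the relative position of ``precisely four points'' is unsubstantiated: the points corresponding to the reversed path edges can move as well, and the coordinates produced by $T_x$ and $T_y$ are global, not local, data, so a single reversal can in principle reshuffle many labels. (In the $K_4$ example of Fig.~\ref{fig:K4} the flipped edge's point happens to stay fixed and exactly the four outer points pass from pattern $2413$ to $3142$, but that is the shortest possible case and proves nothing in general.) The static formulation via ROPs, as in the paper, avoids having to control what a flip does to the whole permutation; if you reformulate your key lemma that way, you will find yourself reproving Proposition~\ref{lem:ROP}, which is where the real work of this proposition lies.
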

\noindent
This proposition is proved in Section~\ref{sec:spe-2413}.
The fact that permutations avoiding $2413$ and
  $41\bar{3}52$ are equinumerous with non-separable planar maps was
   already proved in~\cite{DGW:96}, by exhibiting  isomorphic \emm generating
   trees, for these two classes. This isomorphism  could be used to
   describe a recursive bijection between permutations and maps. It
   turns out that, up  to simple symmetries, our direct, non-recursive
   bijection, is   equivalent  to the one that is implicit
   in~\cite{DGW:96}. This is  explained in Section~\ref{sec:dulucq}.

Observe that, if $\pi$ is a Baxter \p\ avoiding  $2413$, then
$\pi^{-1}$ is a Baxter \p\ avoiding $3142$. As
$\Phi(\pi^{-1})=\mir(\Phi(\pi))$, the restriction of $\wh\Phi$ to
Baxter \ps\ avoiding  $3142$ is also a bijection with non-separable
planar maps. We now describe what happens when we restrict $\wh\Phi$
to \ps\ avoiding both 2413 and 3142  (such permutations are always
Baxter as  they obviously avoid the barred patterns $25\bar 3 14$ and
$41\bar 3 52$).

\begin{Proposition}\label{prop:SP}
  The restriction of $\wh\Phi$ to permutations  avoiding
  the patterns $2413$ and $3142$  is a bijection between these \ps\
  and rooted \emm series-parallel maps,,
which transforms the standard parameters in the same way as  in 
Proposition~\ref{prop:bijection_maps}.
\end{Proposition}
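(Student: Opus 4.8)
The plan is to use the well-known fact that the permutations avoiding $2413$ and $3142$ are exactly the \emph{separable} permutations, generated from the single point by two operations: the \emph{direct sum} $\sigma\oplus\tau$, which places $\sigma$ in the lower-left block and $\tau$ in the upper-right block, and the \emph{skew sum} $\sigma\ominus\tau$, which places $\sigma$ in the upper-left block and $\tau$ in the lower-right block. On the map side, series-parallel orientations are generated from a single edge by \emph{series composition} (identifying the sink of the first orientation with the source of the second) and \emph{parallel composition} (identifying the two sources and the two sinks). The heart of the argument is to show that $\Phi$ intertwines these two pairs of operations, after which the statement follows by restricting the bijection of Proposition~\ref{prop:bijection_maps}.

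First I would analyse a direct sum $\pi=\sigma\oplus\tau$ with $\sigma$ of size $k$. Computing the white vertices shows that the ascent at position $k$ creates the white vertex $w_k=(k+1/2,k+1/2)$, sitting precisely at the corner between the two blocks; since every black point of $\sigma$ lies strictly to the lower-left of every black point of $\tau$, no Hasse edge joins the two blocks except through $w_k$. Thus $w_k$ is a cut vertex of $\Phi(\pi)$, equal to the sink of $\Phi(\sigma)$ and to the source of $\Phi(\tau)$, so that $\Phi(\sigma\oplus\tau)$ is the series composition of $\Phi(\sigma)$ and $\Phi(\tau)$.

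Next I would treat a skew sum $\pi=\sigma\ominus\tau$. Here every point of $\sigma$ has smaller abscissa but larger ordinate than every point of $\tau$, so the two blocks are mutually incomparable for the product order and are joined neither by a black-black edge nor through any interior white vertex. The only shared vertices are the global poles $w_0=(1/2,1/2)$ and $w_n=(n+1/2,n+1/2)$, to which the minimal (respectively maximal) black points of both blocks attach; checking that the interior white vertices of each block are computed exactly as in the stand-alone orientations $\Phi(\sigma)$ and $\Phi(\tau)$, one concludes that $\Phi(\sigma\ominus\tau)$ is the parallel composition of $\Phi(\sigma)$ and $\Phi(\tau)$.

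With both compatibilities established, an induction on the size shows that $\Phi$ maps separable permutations bijectively onto series-parallel bipolar orientations: the single point corresponds to the single edge, and the recursive operations match, using that every separable permutation of size at least two is a direct or skew sum of smaller separable permutations and every series-parallel orientation is a series or parallel composition of smaller ones. Since separable permutations avoid $2413$ and $41\bar{3}52$, they lie in the domain of $\wh\Phi$ from Proposition~\ref{prop:bijection_maps}, on which $\wh\Phi$ is injective; its image is exactly the set of rooted series-parallel maps, since a rooted map is series-parallel if and only if it underlies a series-parallel bipolar orientation. The statistics are transformed just as in Proposition~\ref{prop:bijection_maps}, being inherited by restriction. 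I expect the main obstacle to be the bookkeeping in the skew-sum case: one must verify carefully that placing $\sigma$ and $\tau$ in opposite corners leaves the interior white vertices of each block unchanged, so that the two sub-orientations glue into a genuine parallel composition sharing only the two poles.
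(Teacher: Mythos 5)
Your proof is correct, but it takes a genuinely different route from the paper's. The paper never decomposes the permutation: it characterizes the image of the doubly-restricted class under $\Phi$ as the plane bipolar orientations having neither a ROP nor a LOP (Proposition~\ref{lem:ROP}), invokes the lattice structure on the set of bipolar orientations of a fixed bipolar map (Proposition~\ref{fact:ROP}, from~\cite{OdM:1994}) to conclude that a bipolar map carries such an orientation if and only if its bipolar orientation is unique, and then proves that uniqueness of the bipolar orientation characterizes series-parallel maps (Lemma~\ref{lem:SP}), via the two orientations of $K_4$ and the Extension Lemma of~\cite{FOR:95}. You instead use the classical fact that the permutations avoiding $2413$ and $3142$ are the separable permutations, generated by direct and skew sums, and show that $\Phi$ intertwines $\oplus$ with series composition and $\ominus$ with parallel composition. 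The bookkeeping you flag does go through: for $\sigma\oplus\tau$ with $|\sigma|=k$, the corner white vertex $w_k=(k+\frac12,k+\frac12)$ dominates every vertex of the $\sigma$-block and is dominated by every vertex of the $\tau$-block, so it is the unique vertex through which the blocks communicate; moreover, for an ascent $a$ inside the $\tau$-block one has $\ell_a\ge\pi(a)>k$, so the $\sigma$-block never supplies the value $\ell_a$ and the interior white vertices of each block are exact translates of those of $\phi(\sigma)$ and $\phi(\tau)$ (the analogous check for $\sigma\ominus\tau$ is even easier, since neither block contributes candidate values to the other). What the paper's route buys is economy and a by-product of independent interest: it reuses Proposition~\ref{lem:ROP}, needed anyway for the $2413$ specialization, and it establishes that series-parallel maps are exactly the bipolar maps with a unique bipolar orientation. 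What your route buys is a stronger structural property of $\Phi$ itself (compatibility of the bijection with the sum and composition operations), and it avoids the lattice theory of~\cite{OdM:1994} and the Extension Lemma of~\cite{FOR:95} entirely. Note that both arguments ultimately rest on the same cited equivalence between the minor-free definition of series-parallel maps and their recursive construction (adapted from~\cite{BoGiKaNo07}), and that your appeal to Proposition~\ref{prop:bijection_maps} for injectivity is legitimate, since separable permutations avoid $2413$ and are Baxter.
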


We say that a rooted non-separable map $M$ is 
\emm series-parallel,   if it does not contain  the complete graph $K_4$ as a minor.
 The terminology is slightly misleading: the map that can be
constructed recursively using the classical series and parallel
constructions (Fig.~\ref{fig:SP}) is not $M$ itself, but the bipolar  map
$\check M$. This is the case for the bipolar map of
Fig.~\ref{fig:bijection}, and one can check that the associated \p\
avoids both 2413 and 3142. Details are given in
Section~\ref{sec:spe-2}, with the proof of Proposition~\ref{prop:SP}.

It is easy to count series-parallel maps, using their recursive
description (we are in the simple framework of \emm decomposable
structures,~\cite{flajolet-sedgewick-book}). The series that counts
them by edges is (up to a factor $t$ accounting for the root edge) the
\gf\ of large Schr\"oder numbers~\cite[Exercise~6.39]{stanley-vol2},    
$$
S(t)= t \ \frac{1-t-\sqrt{1-6t+t^2}}2= \sum_{n\ge 1}
t^{n+1} 
\sum_{k=0} ^n \frac{(n+k)! }{k! (k+1)! (n-k)!}.
$$
Thus the number of \ps\ of size $n$ avoiding  $2413$ and $3142$ is the
$n$th Schr\"oder number, as was first proved in~\cite{West:95}. See
also~\cite{Gir:93}, where this result is proved via a bijection with
certain trees.

\section{Generating trees}\label{sec:trees}
A \emm generating tree, is a rooted plane tree with labelled nodes
satisfying the following property: if two nodes have the same label, the 
lists of labels of their children are the same. In other words,
the (ordered) list of labels of the children is completely determined by the
label of the parent. 
The rule that tells how to label the children of a node, given its
label, is called the 
\emm rewriting rule, of the tree. In particular, the tree $\cT$ with root
$(1,1)$ and rewriting rule:
\beq\label{gtij}
(i,j) \leadsto 
\left\{\begin{array}{lccl}
  (1,j+1),& (2,j+1),& \ldots &(i,j+1),\\
(i+1,j),& \ldots& (i+1,2),& (i+1, 1),
\end{array}\right.
\eeq
will be central in the proofs of our results.
The first three levels
of this tree are shown in Fig.~\ref{fig:GT-baxter}, left.
In this section, we describe a generating tree for Baxter \ps, and
another one for plane orientations, which are both isomorphic to
$\cT$. The properties of these trees, combined with this isomorphism,
imply the existence of a canonical bijection between Baxter \ps\
and plane orientations satisfying the conditions stated in
Theorem~\ref{thm:phibiject}. In the next section, we will prove that
this bijection coincides with our map $\Phi$.

\subsection{A generating tree for Baxter permutations}\label{sec:genbaxt}

Take a Baxter permutation $\pi$ of length $n+1$, and  remove the value
$n+1$: this gives another Baxter permutation, denoted $\bpi$, of length
$n$. Conversely, given $\si \in \BS_n$, it is well known, and easy to
see, that the \ps\ $\pi$ such that $\bpi=\sigma$ are obtained by
inserting the value $n+1$:
\begin{itemize}
  \item either just before an lr-maximum of $\si$,
\item or just after an rl-maximum of $\si$.
\end{itemize}
This observation was  used already in the first paper where Baxter \ps\ were
counted~\cite{CGHK:78}. We write $\pi=L_k(\si)$ if $\pi$ is obtain by
inserting $n+1$ just before the $k$th 
lr-maximum of $\si$, and
$\pi=R_k(\si)$ if $\pi$ is obtain by 
inserting $n+1$ just after the $k$th rl-maximum of $\si$ (with the
convention that the first lr-maximum is $\si(n)$). We henceforth
distinguish \emm left, and \emm right, insertions, or $L$- and
$R$-insertions for short. This is refined by talking about
$(L,k)$-insertions (and $(R,k)$-insertions)  when we need to specify
the position of the insertion.

This construction allows us to display Baxter \ps\ as the nodes of a
generating tree $\cT_b$. The root is the unique permutation of size $1$,
and the children of a node
$\si$  having $i$ lr-maxima and $j$ rl-maxima are, from left to right, 
$$
\begin{array}{lcccccccccc}
L_1(\si), &L_2(\si), &\ldots,& L_i(\si), 
& R_j(\si),& \ldots,& R_2(\si),& R_1(\si).
\end{array}
$$
Hence we find at level $n$ the \ps\ of $\BS_n$. 
The first layers of $\cT_b$ are shown on the right of Fig.~\ref{fig:GT-baxter}.
Observe that, if $\si$ has $i$ lr-maxima and $j$ rl-maxima, then
$L_k(\si)$ has $k$ lr-maxima and $j+1$ rl-maxima, while $R_k(\si)$ has
$i+1$ lr-maxima and $k$ rl-maxima. In other words, if we replace in
the tree $\cT_b$ every \p \ by the pair $(i,j)$ giving  the number of
lr-maxima and  rl-maxima, we obtain the generating tree $\cT$ defined
by~\eqref{gtij}. That is,
\begin{Proposition}
  The generating tree $\cT_b$ of Baxter \ps\ is isomorphic to the tree
  $\cT$ defined by~\rm{\eqref{gtij}}.
\end{Proposition}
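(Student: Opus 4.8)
The plan is to exhibit the isomorphism explicitly through the labelling $\Xi$ that sends a node $\si$ of $\cT_b$ to the pair $(i,j)$, where $i$ and $j$ are its numbers of lr-maxima and rl-maxima. First I would check that $\Xi$ respects roots: the unique \p\ of size $1$ has one lr-maximum and one rl-maximum, so its label is $(1,1)$, the root of $\cT$. Since a generating tree is entirely determined by its root label together with its rewriting rule, it then suffices to prove that the \emph{ordered} list of labels of the children of $\si$ in $\cT_b$ is precisely the right-hand side of~\eqref{gtij} applied to $(i,j)$. Recall that $\si$ has exactly $i+j$ children, namely $L_1(\si),\dots,L_i(\si),R_j(\si),\dots,R_1(\si)$, while $(i,j)$ also has $i+j$ children in $\cT$, so the counts agree at the outset.

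The combinatorial heart of the argument is the observation stated just before the proposition: $L_k(\si)$ has label $(k,j+1)$ and $R_k(\si)$ has label $(i+1,k)$. I would establish this via a single structural lemma: \emph{in any permutation, no lr-maximum lies strictly to the right of an rl-maximum}. Indeed, if $p$ were an lr-maximum with $p>q$ for some rl-maximum $q$, then $\si(p)>\si(q)$ because $p$ is an lr-maximum, while $\si(q)>\si(p)$ because $q$ is an rl-maximum, a contradiction. Consequently every lr-maximum occupies a position weakly to the left of the global maximum, and every rl-maximum a position weakly to its right. Now insert $n+1$ just before the $k$th lr-maximum (counted from the left). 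The $k-1$ lr-maxima lying before it survive, $n+1$ becomes a new lr-maximum, and every later value is now dominated by $n+1$ to its left; hence the new \p\ has exactly $k$ lr-maxima. For rl-maxima, $n+1$ is a new one, and by the lemma the insertion site lies weakly to the left of every rl-maximum of $\si$, so none of the $j$ original rl-maxima is destroyed, giving $j+1$ in total and the label $(k,j+1)$ for $L_k(\si)$. Applying the same reasoning to $\rev(\si)$, which exchanges lr- and rl-maxima and turns an $L$-insertion into an $R$-insertion, yields the label $(i+1,k)$ for $R_k(\si)$.

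It then remains to read off the order. Letting $k$ run from $1$ to $i$, the children $L_1(\si),\dots,L_i(\si)$ carry labels $(1,j+1),(2,j+1),\dots,(i,j+1)$, matching the first line of~\eqref{gtij}; letting $k$ run from $j$ down to $1$, the children $R_j(\si),\dots,R_1(\si)$ carry labels $(i+1,j),\dots,(i+1,1)$, matching the second line. Since this ordered list depends only on $(i,j)$, the labelled tree $\cT_b$ satisfies exactly the rewriting rule of $\cT$ and shares its root label, so $\Xi$ is the desired isomorphism. The one point requiring care — and the step I expect to be the main obstacle — is the rl-maxima count under an $L$-insertion, namely verifying that \emph{all} $j$ original rl-maxima persist (including the boundary case $k=i$, where $n+1$ is inserted just before the old global maximum). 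This is exactly where the position lemma is indispensable. It is worth noting that the lemma, and hence the whole rewriting rule, holds for arbitrary permutations; the Baxter hypothesis enters only earlier, to guarantee that these insertions stay inside $\BS_{n+1}$ and realize each element of $\BS_{n+1}$ exactly once, so that the level-$n$ nodes of $\cT_b$ are precisely $\BS_n$.
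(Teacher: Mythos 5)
Your proposal is correct and takes essentially the same route as the paper: the paper obtains the proposition from the observation, stated just before it, that $L_k(\si)$ has $k$ lr-maxima and $j+1$ rl-maxima while $R_k(\si)$ has $i+1$ lr-maxima and $k$ rl-maxima, so that relabelling each node by its pair (number of lr-maxima, number of rl-maxima) turns $\cT_b$ into $\cT$. Your position lemma (no lr-maximum lies strictly to the right of an rl-maximum) simply supplies the verification of this counting fact, which the paper asserts without proof.
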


\begin{figure}[thb!]
\input{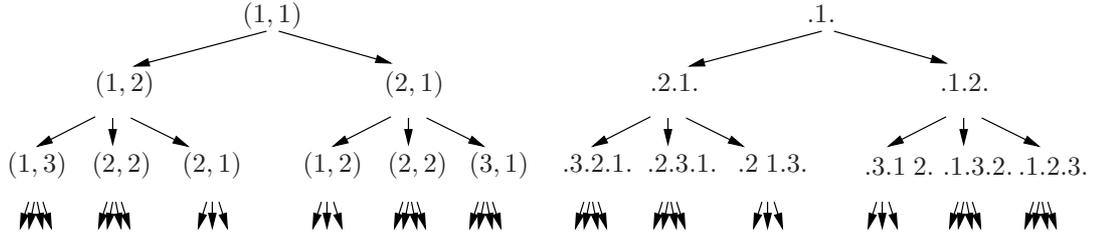}
\caption{The generating tree $\cT$ with rewriting rule \eqref{gtij},
  and the  generating tree $\cT_b$ of Baxter permutations. The dots represent
  the possible insertion positions for the new maximal element.} 
\label{fig:GT-baxter}
\end{figure}

We now observe a property of the tree $\cT_b$ that will be crucial to
prove one of the symmetry properties of our bijection.
\begin{Proposition}\label{prop:reverse}
  The tree obtained from $\cT_b$ by replacing each \p\ $\pi$ by
  $\rev(\pi)$ coincides with the tree obtained by reflecting  $\cT_b$
  in a (vertical) mirror.
\end{Proposition}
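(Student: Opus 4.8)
The plan is to reduce the statement to two \emph{commutation identities} between reversion and the insertion operators $L_k,R_k$, and then to observe that both trees in question are governed by the same root and the same child-generation rule. Concretely, writing $\tau=\rev(\si)$, I claim
\[
\rev\circ L_k = R_k\circ \rev \qquad\text{and}\qquad \rev\circ R_k = L_k\circ \rev .
\]
To prove the first identity, recall that $L_k(\si)$ is obtained by inserting the new largest value just before the $k$th lr-maximum of $\si$, the lr-maxima being numbered from the left (so that $L_1$ inserts at the very front). Reversing a \p\ exchanges left and right, hence turns lr-maxima into rl-maxima while reversing their numbering, and turns ``inserting just before'' into ``inserting just after''. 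Thus if the $k$th lr-maximum of $\si$ (from the left) sits at position $p$, then in $\tau=\rev(\si)$ the same value becomes the $k$th rl-maximum (from the right), and inserting the new maximum just before it in $\si$ becomes inserting it just after it in $\tau$; this is exactly $R_k(\tau)$. The second identity is symmetric. I would record this as a short lemma, taking care of the numbering conventions ($L_1$ at the front, $R_1$ at the back, first rl-maximum $=\si(n)$).

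Granting these identities, the rest is bookkeeping. Let $\si$ have $i$ lr-maxima and $j$ rl-maxima, so that $\tau=\rev(\si)$ has $j$ lr-maxima and $i$ rl-maxima. In $\cT_b$ the ordered children of $\si$ are
\[
L_1(\si),\ldots,L_i(\si),\,R_j(\si),\ldots,R_1(\si).
\]
Reflecting $\cT_b$ in a vertical mirror reverses, at every node, the order of its children; so in the reflected tree the node labelled $\tau$ has ordered children $R_1(\tau),\ldots,R_i(\tau),L_j(\tau),\ldots,L_1(\tau)$, which is precisely the child-list prescribed by the rewriting rule of $\cT_b$ for a node with $j$ lr-maxima and $i$ rl-maxima. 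On the other hand, the tree obtained by relabelling each node $\si$ of $\cT_b$ by $\rev(\si)$ keeps the order of children and applies $\rev$ to each label; using the commutation identities, the children of $\tau=\rev(\si)$ become $\rev(L_1\si),\ldots,\rev(L_i\si),\rev(R_j\si),\ldots,\rev(R_1\si)=R_1(\tau),\ldots,R_i(\tau),L_j(\tau),\ldots,L_1(\tau)$. Thus both trees assign to any node $\tau$ the very same ordered list of children.

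Finally, both trees have root the unique \p\ of size $1$ (fixed by $\rev$ and by reflection), and, as just shown, both are generated from this root by the same deterministic child rule depending only on the label $\tau$ (through its numbers of lr- and rl-maxima). A generating tree is determined by its root together with such a rule, so a straightforward induction on the depth shows that the two trees agree level by level, hence coincide. The only real obstacle is the first step: the commutation identities require one to be genuinely careful about how reversion acts on insertion \emph{positions} and on the \emph{numbering} of lr- and rl-maxima; once these are pinned down, the identification of the two trees is purely formal.
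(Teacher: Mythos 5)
Your proposal is correct and takes essentially the same route as the paper: the paper's entire proof consists of the two commutation identities $\rev(L_k(\pi))=R_k(\rev(\pi))$ and $\rev(R_k(\pi))=L_k(\rev(\pi))$, which are exactly the identities you state and justify, and the level-by-level bookkeeping you add is what the paper leaves implicit behind ``it suffices to observe''. (One wording slip only: the reflected child list $R_1(\tau),\ldots,R_i(\tau),L_j(\tau),\ldots,L_1(\tau)$ is the \emph{reversal} of, not equal to, the list prescribed by the rewriting rule of $\cT_b$ for $\tau$, but this does not affect your argument, since the operative comparison is with the relabelled tree in your next sentence.)
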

In other words, if the sequence of insertions $(S_1, k_1), (S_2,
k_2),\ldots , (S_{n-1}, k_{n-1})$, where $S_i \in \{L,R\}$ and $k_i \in \ns$,
yields from the \p\  1 to the \p\ $\pi \in 
\BS_n$, then the sequence $(\bar S_1, k_1), (\bar S_2,
k_1), \ldots , (\bar S_{n-1}, k_{n-1})$ obtained by replacing all L's by R's
and vice-versa yields from 1 to $\rev(\pi)$.

\begin{proof}
  It suffices to observe that
$$
\rev(L_k(\pi))= R_k(\rev(\pi)) \quad \hbox{and} \quad 
\rev(R_k(\pi))= L_k(\rev(\pi)).
$$
\end{proof}

\subsection{A generating tree for plane bipolar orientations}\label{sec:genbip}

Let $O$ be a plane  bipolar orientation with $n+1$ edges.
Let $e$ be the last edge of the left border of $O$, and let $v$
be its starting point. The endpoint of $e$
is the sink $t$.
Perform the following transformation: 
  if $v$ has outdegree 1, contract $e$,  otherwise delete $e$. 
This gives a new  plane bipolar orientation, denoted $\overline O$,
having $n$ edges. Indeed, the
degree condition on $v$ guarantees that the contraction never creates
a cycle and that the deletion never creates a  sink.

Conversely, let $P$ be  an orientation with $n$ edges. We want to
describe  the orientations $O$ such that $\overline O=P$. Our
discussion is illustrated in Fig.~\ref{fig:bipolar-children}.
Let  $i$ be the left outer degree of $P$, and  $j$  the (in)degree of
the sink $t$ of $P$. Let $v_1=s,v_2, \ldots, v_i,v_{i+1}=t$ be the
vertices of the left border, visited from $s$ to $t$.  
Denote  $e_1,e_2,\ldots, e_j$ the edges incident to $t$, from
right to left (the infinite face is to the right of $e_1$ and to the
left of $e_j$). 
The orientations $O$ such that 
$\overline O=P$ are obtained by adding an edge $e$ whose contraction
or deletion gives $P$. This results in two types of edge-insertion:
\begin{itemize}
\item  {\bf  Type L.} For $k \in \llbracket 1,i\rrbracket  =\{1, 2,
  \ldots, i\}$, the
  orientation $L_k(P)$ is   obtained by adding  an edge 
from $v:=v_k$ to $t$, having the infinite   face on its left.
\item {\bf Type R.} 
For $k \in \llbracket 1,j\rrbracket$, the orientation $R_k(P)$ is
  constructed as follows. Split the vertex $t$ into two
  neighbour vertices $t$ and $v$,
and re-distribute the edges adjacent to $t$:  the edges
  $e_1, e_2,\dots, e_{k-1}$ 
remain connected to $t$, while $e_k, e_{k+1}, \dots, e_j$ are
  connected to $v$.  Add an edge from $v$ to $t$.
\end{itemize}
In both cases,  the last edge of the left border of the resulting
orientation $O$ joins $v$ to $t$.
After an $L$-insertion,  $v$ has outdegree 2 or more, so that $e$
will be deleted in the
construction of $\overline O$, giving the orientation $P$. 
After an $R$-insertion,  $v$ has outdegree 1, so that $e$
will be contracted in the construction of $\overline O$, giving the orientation $P$. The fact
that we use, as in the construction of Baxter \ps, the notation
$L_k$ and $R_k$ is of course not an accident.

\begin{figure}[htb!]
\begin{center}
\input{Figures/bipolar-children-new-ascent.pstex_t}
\caption{Inserting a new edge in a plane bipolar orientation ($i=3,j=4$).}
\label{fig:bipolar-children}
\end{center}
\end{figure}

We can now define the generating tree $\cT_o$ of plane bipolar orientations:
the root is the unique orientation with one edge, and the children of
a node $P$ having left outer degree $i$ and sink-degree $j$ are, from
left to right,  
$$
\begin{array}{lcccccccccc}
L_1(P), &L_2(P), &\ldots,& L_i(P), 
& R_j(P),& \ldots,& R_2(P),& R_1(P).
\end{array}
$$
Hence we find at level $n$ the orientations with $n$ edges. The first
levels of this tree are shown in Fig.~\ref{fig:GT-bipolar}. Observe
that, if $P$ has  left outer degree $i$ and sink-degree $j$, then
$L_k(P)$ has left outer degree  $k$ and sink-degree $i+1$, while 
$R_k(P)$ has left outer degree $i+1$ and sink-degree $k$.
In other words, if we replace in
the tree $\cT_o$ every orientation by the pair $(i,j)$ giving  the
left outer degree and the sink-degree, we obtain the generating tree
$\cT$ defined by~\eqref{gtij}.

\begin{Proposition}
  The generating tree $\cT_o$ of plane bipolar orientations is
  isomorphic to the tree   $\cT$ defined by~\rm{\eqref{gtij}}.
\end{Proposition}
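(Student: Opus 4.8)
The plan is to exhibit the labeling of $\cT_o$ that realizes the rewriting rule \eqref{gtij}, exactly as was done for $\cT_b$. I would label each plane bipolar orientation $O$ of $\cT_o$ by the pair $(i,j)$, where $i$ is its left outer degree and $j$ is the degree of its sink. The root of $\cT_o$ is the single-edge orientation $s\to t$: its left border is that one edge, so $i=1$, and its sink is incident to a single edge, so $j=1$. Thus the root carries the label $(1,1)$, the root label of $\cT$.

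Next I would check that the label of every child is determined by the label of its parent and agrees with \eqref{gtij}. By the construction preceding the statement, a node $P$ with label $(i,j)$ has precisely the $i+j$ children
$$L_1(P),\ \ldots,\ L_i(P),\ R_j(P),\ \ldots,\ R_1(P),$$
read from left to right. It remains to read off their labels. For an $L$-insertion $L_k(P)$ with $1\le k\le i$, the added edge joins $v_k$ to $t$ and lies on the left border, so the new left border is $v_1\to\cdots\to v_k\to t$, of length $k$, while the sink gains exactly one incident edge; hence $L_k(P)$ has label $(k,\,j+1)$. For an $R$-insertion $R_k(P)$ with $1\le k\le j$, splitting $t$ moves the edge $v_i\to t$ to $v_i\to v$ and adds the edge $v\to t$, lengthening the left border by one, so the left outer degree becomes $i+1$, while the new sink is incident to $e_1,\ldots,e_{k-1}$ together with the single new edge, i.e.\ it has degree $k$; hence $R_k(P)$ has label $(i+1,\,k)$. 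Reading these in order, the children of a node labelled $(i,j)$ carry the labels
$$(1,j+1),\ (2,j+1),\ \ldots,\ (i,j+1),\ (i+1,j),\ \ldots,\ (i+1,1),$$
which is exactly the right-hand side of \eqref{gtij}.

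I would then assemble these observations into the isomorphism. Since the label of each child depends only on the parent's label, the labelled tree $\cT_o$ is a generating tree, and its root label and rewriting rule coincide with those of $\cT$. A generating tree is determined by this data, so the map sending a node of $\cT$ to the node of $\cT_o$ reached by the same sequence of child-selections is a well-defined isomorphism of labelled plane trees, built by an immediate induction on the level: at each step the $s$-th child of a node of $\cT_o$ has the same label as the $s$-th child of the corresponding node of $\cT$, and has the corresponding list of further children.

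The only point requiring real care — and the step I would treat as the main obstacle were it not already settled by the construction above — is that $\cT_o$ is a genuine tree carrying each orientation exactly once with exactly the stated children. Concretely, one must know that the parent map $O\mapsto \overline O$ is well defined (the degree condition on $v$ guarantees that contraction creates no cycle and deletion creates no new sink), that the preimages of $P$ are \emph{exactly} the $i+j$ insertions $L_k(P)$ and $R_k(P)$, and that these are pairwise distinct. Distinctness is itself a consequence of the label computation: the $L_k(P)$ have pairwise distinct left outer degrees $k\le i$, the $R_k(P)$ have pairwise distinct sink-degrees $k$, and every $R_k(P)$ has left outer degree $i+1$, strictly larger than that of any $L_k(P)$. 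Granting the completeness and validity of the insertion description, the isomorphism is immediate.
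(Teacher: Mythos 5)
Your proof is correct and takes essentially the same route as the paper: label each orientation by the pair (left outer degree, sink-degree), note the root is $(1,1)$, and check that $L_k(P)$ and $R_k(P)$ carry labels $(k,j+1)$ and $(i+1,k)$ respectively, which is exactly the rewriting rule~\eqref{gtij}. Your computation is in fact the accurate one --- the paper's text asserts that $L_k(P)$ has sink-degree $i+1$, an evident typo for $j+1$ --- and your closing remarks on the completeness and pairwise distinctness of the children are implicitly contained in the paper's description of the insertions $L_k$, $R_k$ as the exact preimages of $P$ under $O\mapsto\overline{O}$.
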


\begin{figure}[htb!]
\begin{center}
\input{Figures/GT-bipolar-ascent.pstex_t}
\caption{The generating tree $\cT_o$ of plane bipolar orientations.}
\label{fig:GT-bipolar}
\end{center}
\end{figure}

We finally observe a property of the tree $\cT_o$ that is the
counterpart of Proposition~\ref{prop:reverse}. Recall  the definitions
of the dual and mirror orientations, given in Section~\ref{sec:orientations}.
\begin{Proposition}\label{prop:pol-*}
  The tree obtained from $\cT_o$ by replacing each orientation $O$ by
  $\mir(O^*)$ coincides with the tree obtained by reflecting  $\cT_o$
  in a (vertical) mirror.
\end{Proposition}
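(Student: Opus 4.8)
The plan is to follow the proof of Proposition~\ref{prop:reverse} as closely as possible. Writing $\tau(O):=\mir(O^*)$, I claim it suffices to establish the two commutation identities
\[
\mir\!\big((L_k(P))^*\big)=R_k\!\big(\mir(P^*)\big)\qquad\text{and}\qquad \mir\!\big((R_k(P))^*\big)=L_k\!\big(\mir(P^*)\big),
\]
that is, $\tau\circ L_k=R_k\circ\tau$ and $\tau\circ R_k=L_k\circ\tau$, with the index $k$ unchanged. Granting these, the proposition follows by induction on the number of edges, exactly as in the ``in other words'' reformulation of Proposition~\ref{prop:reverse}: the base case is the one-edge orientation, which $\tau$ fixes, and the identities show that if $O$ is produced by the insertion sequence $(S_1,k_1),\dots,(S_{n-1},k_{n-1})$ then $\tau(O)$ is produced by $(\bar S_1,k_1),\dots,(\bar S_{n-1},k_{n-1})$, where $\bar L=R$ and $\bar R=L$. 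Since the children of a node with label $(i,j)$ are listed as $L_1,\dots,L_i,R_j,\dots,R_1$, exchanging $L_k\leftrightarrow R_k$ (same $k$) reverses this list, which is precisely the vertical reflection of $\cT_o$. Moreover, because $\mir$ and duality generate the dihedral group of order $8$ of Figure~\ref{fig:sym8-map} (so that $\mir\circ{*}\circ\mir={*}^{-1}$), the map $\tau$ is an involution; hence the second identity follows from the first on applying $\tau$, and I only treat $\tau\circ L_k=R_k\circ\tau$.

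Before the geometry I record how $\tau$ acts on labels, which is what pins down the index. With the duality convention of Figure~\ref{fig:bip}, the sink of $O^*$ is the left outer face of $O$, so its degree equals the left outer degree $i$ of $O$; and the right border of $O^*$ consists exactly of the duals of the edges entering the sink of $O$, so its length equals the sink-degree $j$ of $O$. Applying $\mir$ (which swaps left and right and preserves the sink-degree) shows that $\tau(O)$ has left outer degree $j$ and sink-degree $i$; that is, $\tau$ sends the label $(i,j)$ to $(j,i)$.

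For the core identity I analyse $(L_k(P))^*$. The insertion $L_k(P)$ adds an edge $e$ from the $k$-th left-border vertex $v_k$ of $P$ to the sink $t$, cutting off a new bounded face $F$ whose boundary is $e$ on the left and the left-border edges $f_k,\dots,f_i$ (running from $v_k$ to $t$) on the right. Dualising, $F$ becomes a vertex $F^*$ incident only to $e^*,f_k^*,\dots,f_i^*$, where $e^*$ is the unique outgoing edge of $F^*$ and points to the sink $t^*$ of $(L_k(P))^*$, while $f_k^*,\dots,f_i^*$ enter $F^*$. Thus $F^*$ has out-degree $1$ and its out-edge ends at the sink, the telltale configuration of an $R$-insertion, and contracting $e^*$ merges $F^*$ into $t^*$ and restores $P^*$. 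I then check that $e^*$ is the last edge of the \emph{right} border of $(L_k(P))^*$, so that $\mir(e^*)$ is the last edge of the left border of $\tau(L_k(P))=\mir((L_k(P))^*)$ and starts at the out-degree-$1$ vertex $\mir(F^*)$. Consequently the generating-tree decomposition of $\tau(L_k(P))$ contracts $\mir(e^*)$ and returns $\mir(P^*)=\tau(P)$, so $\tau(L_k(P))$ is an $R$-insertion on $\tau(P)$. Since the insertions $R_{k'}(\tau(P))$ are distinguished by their labels $(j+1,k')$, and $\tau(L_k(P))$ has label $(j+1,k)$—apply the rule $(i,j)\mapsto(j,i)$ to the label $(k,j+1)$ of $L_k(P)$—this insertion is $R_k$, giving $\tau(L_k(P))=R_k(\tau(P))$.

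I expect the main obstacle to be the handedness in the previous paragraph: one must verify, from the embedding of the dual, that $e^*$ really lands on the right border of $(L_k(P))^*$ (so that the $\mir$ is exactly what is needed) and that the contraction of $e^*$ returns $P^*$ rather than a neighbouring orientation. This amounts to tracking the clockwise orders around the sink $t^*$ and around $F^*$, and is best settled with a figure in the spirit of Figure~\ref{fig:bipolar-children}; the label bookkeeping $(i,j)\mapsto(j,i)$ serves only as a consistency check and does not by itself fix the embedding.
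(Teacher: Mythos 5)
Your proposal is correct and its skeleton is exactly the paper's: reduce the statement to the two commutation identities $\mir((L_k(O))^*)=R_k(\mir(O^*))$ and $\mir((R_k(O))^*)=L_k(\mir(O^*))$, and then conclude by transporting insertion sequences from the one-edge root, swapping $L$'s and $R$'s. The only difference is in how the identities are checked: the paper declares them ``clear from'' Fig.~\ref{fig:bipolar-children-dual}, while you verify the first one explicitly (the new bounded face $F$, its dual vertex of outdegree $1$ whose out-edge $e^*$ ends the right border of $(L_k(P))^*$, deletion--contraction duality identifying the parent as $\tau(P)$, and the label swap $(i,j)\mapsto(j,i)$ to pin down the index $k$) and then derive the second from the involution property of $\tau=\mir\circ{*}$ --- both refinements are sound and consistent with the paper's conventions.
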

In other words, if the sequence of insertions $(S_1, k_1), (S_2,
k_2),\ldots , (S_{n-1}, k_{n-1})$, where $S_i \in \{L,R\}$ and $k_i \in \ns$,
yields from the root of $\cT_o$ to the orientation $O \in 
\OS_n$, then the sequence $(\bar S_1, k_1), (\bar S_2,
k_1), \ldots , (\bar S_{n-1}, k_{n-1})$ obtained by replacing all L's by R's
and vice-versa yields from the root to $\mir(O^*)$.
\begin{proof}
  It suffices to observe that
$$
\mir((L_k(O))^*)=R_k (\mir(O^*)) \quad \hbox{and} \quad
\mir((R_k(O))^*)=L_k (\mir(O^*)).
$$
This should be clear from Fig.~\ref{fig:bipolar-children-dual}, which
shows that applying $L_k$  to $O$ boils down to applying $R_k$  to
$\mir(O^*)$, and that conversely, applying $R_k$  to $O$ boils down to
applying $L_k$ to $\mir(O^*)$. Note that this figure only shows $O^*$, and not
its mirror image $\mir(O^*)$.
\end{proof}

\begin{figure}[htb!]
\begin{center}
\input{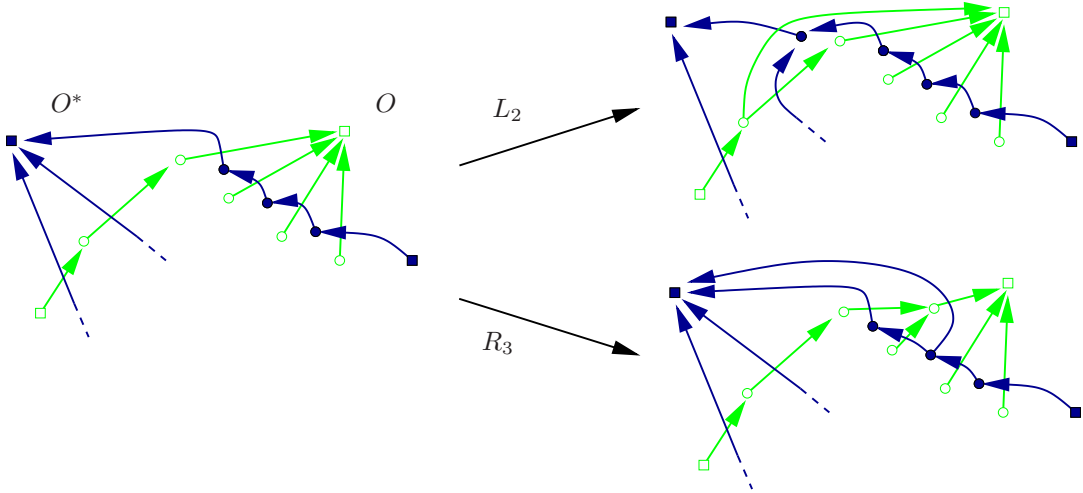}
\caption{How the orientation $O^*$ is changed when $L_k$ or
  $R_k$ is applied to $O$.}
\label{fig:bipolar-children-dual}
\end{center}
\end{figure}

We conclude with an observation that will be useful in the proof of
Proposition~\ref{prop:symmetries}. 
\begin{Remark}\label{remark}
  Take  a plane bipolar orientation $O$ having $n$ edges, and label these
edges with $1,2, \ldots, n$ in the order where they were
created in the generating
 tree. Fig.~{\rm\ref{fig:bipolar-children-dual}} shows that when the
 edge $e$ is added to $O$ in the generating tree, the edge $\mir(e^*)$
 is added to $\mir(O^*)$ (we denote by $e^*$ the dual edge of
 $e$). Consequently, for all edges $e$ of $O$, the label of  $e$ in
 $O$ coincides with  the label of  $\mir(e^*)$ in $\mir(O^*)$.  
\end{Remark}

 \subsection{The canonical bijection}
We have seen that the generating trees $\cT_b$ and $\cT_o$ are both
isomorphic to the tree~\eqref{gtij} labelled by pairs
$(i,j)$. This gives immediately a canonical bijection $\Lambda$ between
Baxter \ps\ and plane bipolar 
orientations: this bijection maps the \p\ 1 to
the one-edge orientation, and is then defined recursively by
$$
\Lambda(L_k(\pi))= L_k(\Lambda(\pi)) \quad \hbox{and} \quad 
\Lambda(R_k(\pi))= R_k(\Lambda(\pi)).
$$
In other words,  if $\pi$ is obtained in the Baxter tree by the sequence
of insertions $(S_1, k_1), (S_2, 
k_2),\ldots , (S_{n-1}, k_{n-1})$, then $\Lambda(\pi)$ is the
orientation obtained by \emm the same sequence, of insertions in the tree of
orientations.
\begin{Theorem}\label{thm:phibiject-canon}
The map $\Lambda$ is a bijection between Baxter permutations and plane
bipolar orientations, which transforms standard parameters as  follows:
\begin{center} \begin{tabular}
{r@{\hspace{0.2cm}}c@{\hspace{0.2cm}}l@{\hspace{0.2cm}}c@{\hspace{0.4cm}}|@{\hspace{0.4cm}}c@{\hspace{0.2cm}}r@{\hspace{0.2cm}}c@{\hspace{0.2cm}}l}
 size  & $\leftrightarrow$ & \#  edges,&  
 & &\# ascents & $\leftrightarrow$ &  \#   non-polar vertices,\\
   \#   lr-maxima & $\leftrightarrow$ & left outer degree,&
 &&\#   rl-minima & $\leftrightarrow$ & right outer degree,\\
 \#   rl-maxima & $\leftrightarrow$ & degree of the sink,& 
 &&\#   lr-minima & $\leftrightarrow$ & degree of the source.
 \end{tabular}
 \end{center}
Moreover, if  $\pi$ is a Baxter \p\  and $O=\Lambda(\pi)$, then
$$
\Lambda(\rev(\pi))= \mir(0^*).
$$
\end{Theorem}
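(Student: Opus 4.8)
The plan is to deduce everything from the two tree isomorphisms $\cT_b\cong\cT\cong\cT_o$ established just above, together with the reflection Propositions~\ref{prop:reverse} and~\ref{prop:pol-*}, so that essentially no new computation is needed. First, since $\Lambda$ is by definition the composite of the isomorphisms $\cT_b\to\cT$ and $\cT\to\cT_o$, it is an isomorphism of plane trees, hence a level-preserving bijection between the nodes of $\cT_b$ and those of $\cT_o$. As level $n$ of $\cT_b$ consists of all of $\BS_n$ and level $n$ of $\cT_o$ of all plane bipolar orientations with $n$ edges (each object occurring exactly once, since the parent maps $\bpi$ and $\overline O$ give every node a unique parent), $\Lambda$ restricts to a bijection on each level; in particular it sends size to number of edges. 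Moreover $\Lambda$ preserves the label $(i,j)$ of $\cT$, which on the permutation side reads (number of lr-maxima, number of rl-maxima) and on the orientation side reads (left outer degree, sink-degree). This immediately yields the entries \#lr-maxima $\leftrightarrow$ left outer degree and \#rl-maxima $\leftrightarrow$ degree of the sink.

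For the remaining three parameters I would argue that each side changes by the same amount along every edge of the tree, so that equality propagates from the common root. Writing $\pi=S_k(\sigma)$ with $S\in\{L,R\}$, one checks on the permutation side: the new maximum $n+1$ is an lr-minimum (resp.\ rl-minimum) of $\pi$ exactly when it is inserted at the front (resp.\ at the back), which happens precisely for $S_k=L_1$ (resp.\ $R_1$), while it never destroys the lr- or rl-minimality of the other values; and, comparing the pair it breaks with the two pairs it creates, an $L$-insertion leaves the number of ascents unchanged whereas an $R$-insertion raises it by one. On the orientation side the insertion rules of Figure~\ref{fig:bipolar-children} give the matching behaviour: $L_1$ adds an edge out of the source $s=v_1$, raising the source degree by one (all other insertions leaving it fixed); $R_1$ lengthens the right border by one (all other insertions leaving it fixed); and an $R$-insertion creates exactly one new non-polar vertex $v$ while an $L$-insertion creates none. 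Since the roots of $\cT_b$ and $\cT_o$ agree on all these statistics (values $1,1,0$ for lr-minima, rl-minima, ascents, and $1,1,0$ for source degree, right outer degree, non-polar vertices), induction along the tree gives \#lr-minima $\leftrightarrow$ source degree, \#rl-minima $\leftrightarrow$ right outer degree, and \#ascents $\leftrightarrow$ \#non-polar vertices.

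Finally, the identity $\Lambda(\rev(\pi))=\mir(O^*)$ is the clean consequence of the two reflection propositions. If $\pi$ is reached from the root of $\cT_b$ by the insertion word $(S_1,k_1),\dots,(S_{n-1},k_{n-1})$, then Proposition~\ref{prop:reverse} shows $\rev(\pi)$ is reached by the swapped word $(\bar S_1,k_1),\dots,(\bar S_{n-1},k_{n-1})$; by the very definition of $\Lambda$, this same swapped word produces $\Lambda(\rev(\pi))$ in $\cT_o$. On the other hand $O=\Lambda(\pi)$ is produced in $\cT_o$ by $(S_1,k_1),\dots,(S_{n-1},k_{n-1})$, and Proposition~\ref{prop:pol-*} says precisely that the swapped word then produces $\mir(O^*)$. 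Two objects produced by the same insertion word in $\cT_o$ coincide, so $\Lambda(\rev(\pi))=\mir(O^*)$.

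I expect the only genuinely delicate point to be the orientation-side half of the second paragraph: one must read off from the explicit constructions of $L_k(P)$ and $R_k(P)$ how the source degree, the right border, and the vertex set are affected, in particular that splitting the sink in an $R_k$-insertion touches the right border only when $k=1$ and always produces a single new internal vertex. The permutation-side increments and the assembly of the symmetry statement are then routine.
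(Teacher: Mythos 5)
Your proposal is correct and follows essentially the same route as the paper: the first three correspondences are read off from the tree isomorphisms, the remaining three are obtained by tracking how each statistic changes under $L_k$- and $R_k$-insertions on both sides (the paper packages this as a refined generating tree with labels $(i,j;m,k,\ell)$, which is exactly your "same increment along every tree edge" induction), and the symmetry identity is deduced from Propositions~\ref{prop:reverse} and~\ref{prop:pol-*} by comparing insertion words, precisely as in the paper.
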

\begin{proof}
  The properties of $\Lambda$ dealing with the size, lr-maxima and rl-maxima
  follow directly from the isomorphism of the trees $\cT_b, \cT_o$ and
  $\cT$. The next three properties 
  are proved by observing that the relevant parameters  evolve in the
  same way in the recursive   construction of Baxter \ps\ and plane bipolar 
  orientations.  Indeed, if we replace every node $\pi$ of $\cT_b$ by
  $(i,j;m,k,\ell)$, where $i,j,m,k,\ell$ are respectively the number
  of lr-maxima, rl-maxima, ascents, lr-minima and rl-minima of $\pi$,
  we obtain the generating tree with root $(1,1;0;1,1)$ and rewriting
  rule:
$$
(i,j;m,k,\ell) \leadsto 
\left\{
\begin{array}{lccl}
  (1,j+1;m, k+1, \ell), (2,j+1;m,k,\ell ), \ldots, (i,j+1;m,k,\ell),\\
(i+1,j;m+1,k,\ell), \ldots, (i+1,2;m+1,k,\ell), (i+1, 1;m+1,k,\ell+1).
\end{array}
\right.
$$ 
Said in words,
 the number of ascents increases by 1 in an $R$-insertion, and
is unchanged otherwise. The number of lr-minima is only changed if we
perform an $(L,1)$-insertion (and then it increases by 1), and the
number of rl-minima is  only changed if we
perform an $(R,1)$-insertion (and then it increases by 1).

It is not hard to see that one obtains the same tree by replacing
every orientation $O$ of $\cT_o$ by  $(i,j;m,k,\ell)$, where
$i,j,m,k,\ell$ are respectively the left outer degree, the sink-degree, the number of non-polar vertices, the source-degree and
the right outer degree. That is, the number of vertices only increases (by
1) in an $R$-insertion, the degree of the source only increases (by 1) in
an $(L,1)$-insertion and the right outer degree only increases (by 1)
in an $(R,1)$-insertion.

\smallskip
Let us finally prove the  symmetry property. Let $(S_1, k_1), (S_2, 
k_2),\ldots , (S_{n-1}, k_{n-1})$ be the sequence
of insertions that leads  to $\pi$ in the Baxter
 tree. By definition of $\Lambda$, this sequence leads to
 $O=\Lambda(\pi)$ in the tree of orientations. By 
Propositions~\ref{prop:reverse} and~\ref{prop:pol-*}, the  sequence
$(\bar S_1, k_1), (\bar S_2,  
k_1), \ldots , (\bar S_{n-1}, k_{n-1})$ obtained by swapping the $L$'s and the $R$'s leads respectively to
$\rev(\pi)$ and  $\mir(O^*)$ in the trees $\cT_b$ and $\cT_o$. By
definition of $\Lambda$, this means  that  $\mir(O^*)=\Lambda(\rev(\pi))$.
\end{proof}

 \section{The mapping $\Phi$ is the canonical bijection}
\label{sec:proofs}

In the previous section, we have described recursively a bijection
$\Lambda$ that implements the isomorphism between the generating trees
$\cT_b$ and $\cT_o$,  and shown it has some interesting properties
(Theorem~\ref{thm:phibiject-canon}). We now prove that the mapping
$\Phi$ defined in Section~\ref{sec:main} coincides with this canonical
bijection $\Lambda$. Simultaneously, we prove the properties of the map $\phi$
stated in Proposition~\ref{prop:phi}.

 \begin{Proposition}\label{lem:phirealise}
 For each Baxter permutation $\pi$, the embedded graph $\phi(\pi)$ is
 planar and  bicolored, with black vertices of indegree and outdegree $1$.
 Moreover, $\Phi(\pi)=\Lambda(\pi)$.
 \end{Proposition}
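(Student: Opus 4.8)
The plan is to establish both assertions together by induction on the size $n$ of $\pi$, using the generating tree $\cT_b$ of Section~\ref{sec:genbaxt}. Concretely, I will show that the construction $\Phi$ obeys the \emph{same} recursion that defines $\Lambda$: writing $\bpi$ for the Baxter permutation obtained from $\pi$ by deleting its largest value, so that $\pi=L_k(\bpi)$ or $\pi=R_k(\bpi)$, I will prove that erasing the black vertices of $\phi(\pi)$ produces exactly $L_k(\Phi(\bpi))$, resp.\ $R_k(\Phi(\bpi))$, where $L_k,R_k$ now denote the edge-insertions on orientations from Section~\ref{sec:genbip}. Granting this, and Proposition~\ref{prop:phi} for $\bpi$, the induction hypothesis $\Phi(\bpi)=\Lambda(\bpi)$ combined with $\Lambda(L_k(\bpi))=L_k(\Lambda(\bpi))$ gives $\Phi(\pi)=\Lambda(\pi)$ at once. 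The base case $n=1$ is immediate: $\phi(\pi)$ is the path $w_0\to b_1\to w_n$, which erases to the one-edge orientation $\Lambda(1)$.

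For the inductive step I would first describe precisely how $\phi(\pi)$ differs from $\phi(\bpi)$. Inserting the value $n+1$ at some position $m$ shifts every point in columns $\ge m$ one unit to the right; since $(x,y)\mapsto(x+1,y)$ is an order isomorphism and preserves the relative order of shifted and unshifted points, the poset underlying $\phi(\pi)$ is, away from the insertion, isomorphic to that of $\phi(\bpi)$ (with the new sink $w_{n+1}$ playing the role of $w_n$). Hence the Hasse structure away from the insertion is unchanged, and genuinely only two things change: a new, topmost black vertex $b=(m,n+1)$ is created, and the white vertices are modified near the insertion. Because $b$ has the maximal ordinate among all black and white vertices except the sink, its unique successor is the sink, so $b$ has outdegree $1$; that $b$ has a single predecessor — hence indegree $1$ — is exactly where the Baxter property enters, through the defining inequality for $\ell_a$ recalled in Fig.~\ref{fig:saillant-rule}. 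Erasing $b$ therefore merges its two incident edges into a single edge of $\Phi(\pi)$ without creating a crossing, which verifies all three assertions of Proposition~\ref{prop:phi} for $\pi$ (the inherited part following from the monotone-shift remark).

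It remains to identify this new edge with the one prescribed by $L_k$ or $R_k$. For an $L$-insertion (before the $k$th lr-maximum) the number of ascents is unchanged, so no non-polar white vertex is created; the new black vertex $b$ has as unique predecessor the white vertex $w$ attached to the ascent just before the $k$th lr-maximum and as unique successor the sink $t$, so erasing $b$ adds exactly the edge $w\to t$ along the left border — this is Type~L once one checks that $w=v_k$. For an $R$-insertion (after the $k$th rl-maximum) the number of ascents increases by $1$: a new non-polar white vertex $v$ appears and becomes the unique predecessor of $b$, while the edges $e_k,\dots,e_j$ incident to the sink of $\Phi(\bpi)$ are re-routed through $v$; erasing $b$ then yields the single edge $v\to t$, so the local change realises the splitting of the sink and the edge redistribution of Type~R. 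Thus in both cases the local surgery matches the orientation insertion.

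The identification just used — the $k$th lr-maximum of $\bpi$ corresponding to the $k$th vertex $v_k$ of the left border, and the $k$th rl-maximum to the $k$th sink-edge — is furnished by the parameter dictionary of Theorem~\ref{thm:phibiject-canon} applied to $\Phi(\bpi)=\Lambda(\bpi)$ (recall lr-maxima $\leftrightarrow$ left outer degree and rl-maxima $\leftrightarrow$ degree of the sink). The main obstacle is precisely this last bookkeeping: one must track carefully, in each of the two cases and in their extreme sub-cases ($k=1$, or insertion at the very ends), how the values $\ell_a$ and the white vertices near the sink are recomputed after the positional shift, and check that the reindexing of the lr- or rl-maxima agrees edge-by-edge with the orderings $v_1,\dots,v_{i+1}$ of the left border and $e_1,\dots,e_j$ of the sink-edges used to define $L_k$ and $R_k$. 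Everything else — planarity, bicoloredness, and the degrees of the black vertices — is local and follows from the Baxter inequality controlling $\ell_a$.
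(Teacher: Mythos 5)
Your proposal follows essentially the same route as the paper: induction along the generating tree $\cT_b$, a case analysis of $L$- versus $R$-insertions, a description of the local surgery turning $\phi(\bpi)$ into $\phi(\pi)$ (shift of the columns to the right of the insertion, one new black vertex, possibly one new white vertex, new covering edges), and the identification of the erased result with the operations $L_k$, $R_k$ on orientations. The base case, the order-preserving shift argument, the degree analysis of the new black vertex, and both case descriptions match the paper's proof in structure and substance.

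The one genuine problem is where you source the key identification. You claim that the correspondence ``$k$th lr-maximum of $\bpi$ $\leftrightarrow$ $k$th vertex $v_k$ of the left border, $k$th rl-maximum $\leftrightarrow$ $k$th sink-edge'' is \emph{furnished} by the parameter dictionary of Theorem~\ref{thm:phibiject-canon}. It is not: that dictionary only equates \emph{cardinalities} (number of lr-maxima $=$ left outer degree, number of rl-maxima $=$ sink degree); it says nothing about which individual lr-maximum sits at which individual border vertex, which is exactly what you need in order to know that the new edge lands at $v_k$ rather than at some other $v_{k'}$. In the paper this pointwise correspondence is maintained by the induction itself: the surgery shows that after an $(L,k)$-insertion the left border of $\Phi(\pi)$ consists of $v_1,\dots,v_k$ followed by the new sink, while the lr-maxima of $\pi$ are the first $k-1$ lr-maxima of $\sigma$ together with $n+1$, whose associated white vertices are precisely $v_1,\dots,v_k$; similarly the sink-edges after an $(R,k)$-insertion are tracked against the rl-maxima $m_1,\dots,m_j$ of $\sigma$. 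So the bookkeeping you defer as ``the main obstacle'' cannot be outsourced to Theorem~\ref{thm:phibiject-canon}: it must be stated as an explicit inductive invariant (the $m$th left-border vertex of $\Phi(\sigma)$ is the white vertex of the ascent preceding the $m$th lr-maximum of $\sigma$, and the $\ell$th sink-edge, from right to left, corresponds to the $\ell$th rl-maximum) and re-verified in both insertion cases. That verification is a direct check from your own surgery description, but it is a check your write-up still owes, and as written the appeal to the dictionary is a step that would fail.
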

 \begin{proof}
 The proof is by induction  on the size of $\pi$. Both statements are
 obvious for $\pi=1$. Now assume that the proposition holds for Baxter
permutations of size  $n$.  Let $\pi \in \BS_{n+1}$, and let  $\si$ be the
parent of $\pi$ in the tree $\cT_b$. Then either $\pi=L_k(\si)$, or
 $\pi=R_k(\si)$ for some $k$. Let $O=\Phi(\si)=\Lambda(\si)$.
We want to prove that  $\phi(\pi)$ satisfies the
   required conditions and that $\Phi(\pi)=L_k(O)$ (or that
 $\Phi(\pi)=R_k(O)$ in the case of an $R$-insertion). Both statements
   follow from a 
   careful observation of how $\phi(\si)$ is changed into $\phi(\pi)$ as
   $n+1$ is inserted in $\si$. Some readers will think that
   looking for two minutes at Figs.~\ref{fig:left-insertion-ascent}
 and~\ref{fig:right-insertion-ascent}  is sufficient to get 
   convinced of the result. For the others, we describe below in
   greater detail what happens during  the insertion of $n+1$ in $\si$.
Essentially,  we describe the   generating tree whose nodes are the
   embedded graphs $\phi(\pi)$,  for $\pi\in \BS_n$.

\begin{figure}
\begin{center}
\input{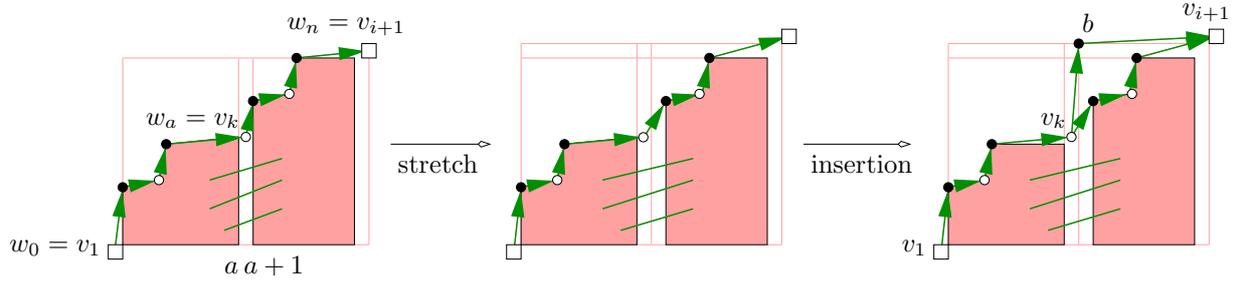}
\caption{How the graph $\phi(\si)$ changes during an $(L,k)$-insertion.}
\label{fig:left-insertion-ascent}
\end{center}
\end{figure}

\noindent
{\bf Case 1.} The \p\ $\pi$ is obtain by left insertion, that is,
   $\pi=L_k(\si)$.  Let $a+1$ be the abscissa of the $k$th lr-maximum of
   $\si$.
Let $i$ be the number of lr-maxima in $\si$. Then the left border of
$O$ has $i+1$ vertices, ranging from $v_1=w_0$ to $v_{i+1}=w_n$. The
vertex $w_a$ is the $k$th vertex $v_k$ of the left border of $O$.
As $n+1$ is inserted in $\si$, the ascents of $\si$ become ascents of
$\pi$,   and no  new ascent is created. 
All the vertices that occur in $\phi(\si)$ occur in
$\phi(\pi)$, but some of them are translated: the white vertex
$w_n$ moves from $(n +1/2, n + 1/2)$ to $(n+3/2,n+3/2)$, 
and all vertices
   located at abscissa $x\ge a+1$ move one unit to the right. These
   translations, illustrated by the 
   first two pictures of Fig.~\ref{fig:left-insertion-ascent}, stretch
  some edges but do not affect
 the covering    relations among vertices. We claim that they also do
 not affect planarity.
Indeed, it is easy to see that the Hasse diagram of  a set $\cS$  of points
 in the plane having distinct abscissas and ordinates, has no
 crossing if and only if $\cS$ avoids the pattern $21\bar3
 54$~\cite{mbm-butler}. Clearly, this property is not affected by
 translating to the right  the rightmost points of $\cS$.

After the translation operations, one new vertex is created: a black vertex $b=(a+1,n+1)$ 
corresponding to the new value $n+1$ in $\pi$.  We need to study how  this
affects the covering relations. That is, which vertices cover $b$, and which vertices are covered by  $b$? 

As  $b$ lies at ordinate  $n+1$, it is only covered by 
$w_n$. This results in a new bicolored edge
from $b$ to $w_n$, which lies sufficiently high not to affect the planarity. 
 Hence $b$  has outdegree 1. 

It remains to see which vertices $b$ covers. Clearly it covers
$w_a$. But then all the vertices lying to the South-West of $b$ are
actually smaller than $w_a$ for our ordering (because $\si(a)$ was an
lr-maximum), so that $b$ covers no vertex other than 
$w_a$. This means it has indegree 1.

To summarize, one goes from $\phi(\si)$ to $\phi(\pi)$, where $\pi=L_k(\si)$, by
 \begin{itemize}
   \item stretching some edges by a translation of  certain vertices, 
 \item inserting one new vertex, $b=(a+1,n+1)$,
 \item adding an edge from $w_a$ to $b$, and another one from $b$ to $w_n$.
 \end{itemize}

The resulting graph is still bicolored, with black vertices of
indegree and outdegree 1. The planarity is preserved as all the
changes  occur to the North-West of the lr-maxima of
$\si$. Finally, recall that the orientation $O=\Phi(\si)$ and $\Phi(\pi)$ are respectively
obtained by erasing the black vertices in $\phi(\si)$ and
$\phi(\pi)$. It should  now be clear that $\Phi(\pi)$ is exactly
the result of an $(L,k)$-insertion in $O$: denoting $v_1, \ldots, v_i,
v_{i+1}=w_n$ the vertices
 of the left border of $\Phi(\si)$, one has
simply added a new edge from $v_k=w_a$ to the sink $w_n$ in the outer face of
$\Phi(\si)$.

\begin{figure}
\begin{center}
\input{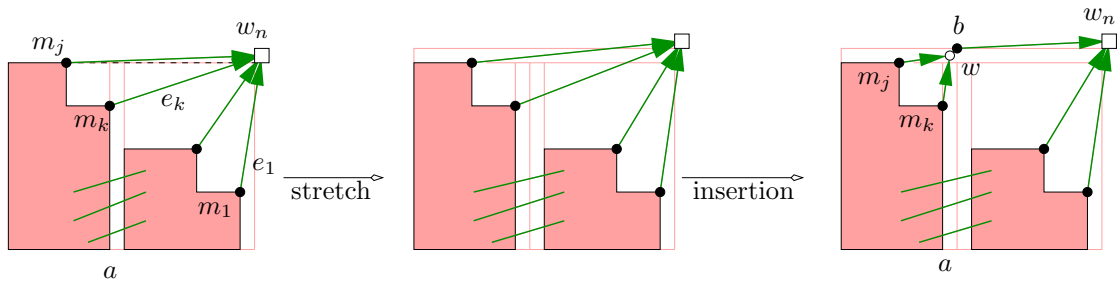}
\caption{How the graph $\phi(\si)$ changes during an $(R,k)$-insertion.}
\label{fig:right-insertion-ascent}
\end{center}
\end{figure}

\noindent {\bf Case 2.} The \p\ $\pi$ is obtain by right insertion, that is,
   $\pi=R_k(\si)$. 
Let $m_1, \ldots, m_j $ be the points of the diagram of $\si$
corresponding to its rl-maxima, from right to
left. Then the sink $t=w_n$ of $O$ has degree $j$, and for all $\ell$, the
$\ell$th edge that 
arrives at $t$ (from right to left) corresponds to the point $m_\ell$
of $\si$ via the correspondence $\Phi$. 
As noticed in the proof of Theorem~\ref{thm:phibiject-canon},
   the ascents of $\si$ become ascents of $\pi$, and a new ascent
   occurs at position $a$, if the $k$th rl-maximum of $\si$ is
   $\si(a)$. 
In particular, all the vertices that occur in $\phi(\si)$ occur in
$\phi(\pi)$. However, some of them are translated: the white vertex
$w_n$ moves from $(n+1/2,n+1/2)$ to $(n+3/2,n+3/2)$, while  the
vertices  located at abscissa $x\ge a+1$
move one unit to the right. These translations, illustrated by the
   first two pictures of Fig.~\ref{fig:right-insertion-ascent}, do not
   affect the planarity nor the covering   relations. 

Then two new vertices are created: a black vertex $b=(a+1,n+1)$
corresponding
 to the new value $n+1$ in $\pi$, and a white vertex $w=(a+1/2, 
n+1/2)$ corresponding to the new ascent. We need to study how  they
affect the covering relations. That is, which vertices cover $w$ or
$b$, and which vertices are covered by $w$ or $b$? 

As $w$ and $b$ lie respectively at ordinate $n+1/2$ and $n+1$, it is
easily seen that $w$ is only covered by $b$, which is only covered by
$w_n$. This results in two new bicolored edges, from $w$ to $b$, and
from $b$ to $w_n$, which lie sufficiently high not to affect the planarity. 
Moreover,  $b$ does not cover any vertex
other than $w$. We have thus proved that $b$ has indegree and
outdegree 1. 

It remains to see which vertices $w$ covers. These vertices were
covered in $\phi(\si)$ by vertices that are now larger than $w$. But
$w_n$ is the only vertex larger than $w$ that was already in
$\phi(\si)$. Since by assumption, $\phi(\si)$ is bicolored, the
vertices covered  by $w$ are black, and hence were rl-maxima in $\si$.
 The vertices $m_1, \ldots, m_{k-1}$ are still covered by
$w_n$ (they lie to the right of $w$), but $m_k, \ldots, m_j$ are
covered by $w$ (Fig.~\ref{fig:right-insertion-ascent}, right). 

To summarize, one goes from $\phi(\si)$ to $\phi(\pi)$, where $\pi=R_k(\si)$, by
 \begin{itemize}
   \item stretching some edges by a translation of  certain vertices, 
 \item inserting two new vertices,  $w=(a+1/2,n+1/2)$ and $b=(a+1,n+1)$,
 \item adding an edge from $w$ to $b$, and another one from $b$ to $w_n$,
 \item re-directing the edge that starts from $m_r$ so that it points to $w$
rather than $w_n$, for $r\ge k$.
 \end{itemize}

The resulting graph is still bicolored, with black vertices of
indegree and outdegree 1. The planarity is preserved as all the
changes  occur to the North-East of the rl-maxima of
$\si$. Finally, recall that the orientations $O=\Phi(\si)$ and $\Phi(\pi)$ are respectively
obtained by erasing the black vertices in $\phi(\si)$ and
$\phi(\pi)$. It should  now be clear that $\Phi(\pi)$ is exactly
the result of an $(R,k)$-insertion in $O$: the leftmost $j-k+1$ edges
that were pointing to $w_n$  are now pointing to the new white vertex $w$.
\end{proof}

We now know that $\Phi$ coincides with the canonical bijection
$\Lambda$, whose properties were stated in
Theorem~\ref{thm:phibiject-canon}. This implies
Theorem~\ref{thm:phibiject}, and the second symmetry property of
Proposition~\ref{prop:symmetries}.  The first property was
 proved just after the statement of this proposition, as well as
 the correspondence between the point $(\pi(i),i)$ and the edge
 $\mir(e)$.  To conclude the proof of
 Proposition~\ref{prop:symmetries}, it remains to prove that $\rho(p)$
 corresponds to the  edge $e^*$. 

Observe that,
as $n+1$ is inserted in $\si$ to form the \p\ $\pi$, the
 edge $e$ that is added to $O=\Phi(\si)$ to form  $\Phi(\pi)$
 corresponds to the point of ordinate $n+1$ in the diagram of
 $\pi$. Hence, the edge labelling introduced in Remark~\ref{remark}
 boils down to labelling every edge of  $\Phi(\pi)$ by the ordinate of the
 corresponding point of $\pi$. Consequently, Remark~\ref{remark} can be
 reformulated as follows:
  if the point $p=(i, \pi(i))$ corresponds to the edge $e$, then the
 point $\rev(p)$ (which is the point of $\rev(\pi)$ with the same
 ordinate as $p$) corresponds to $\mir(e^*)$. Combining this
 correspondence with the first one (which deals with $\pi^{-1}$ and
 $\mir(O)$)  gives the
 final statement of Proposition~\ref{prop:symmetries}.

\qed

 \section{The inverse bijection}
\label{sec:proofs-inverse}

It is now an easy task to prove that the map $\Psi$ described in
Section~\ref{sec:inverse}  is indeed the inverse of the bijection
$\Phi$. As we already know that $\Phi$ is bijective, it suffices to
prove that $\Psi(\Phi(\pi))=\pi $ for all Baxter \p\ $\pi$. For a
Baxter permutation $\pi$, 
we denote by $O$ the orientation $\Phi(\pi)$ and by $O'$ the bicolored
oriented map obtained from $O$ 
by adding a black vertex in the middle of each edge. 
Recall that  $\phi(\pi)$ is an embedding of $O'$. This allows us to
 consider the trees  $T_x$ and $T_y$ as  embedded in $\rs^2$.
In particular, the edge of $T_x$ joining  a white vertex $v$ to its parent 
 is the steepest  edge ending at $v$ in $\phi(\pi)$. See
Fig.~\ref{fig:inverse-proof}, left.

Every point of the diagram of $\pi$ corresponds\mps{détaillé} to an edge of
$O$. Thus the black vertices that we have added to form $O'$ are in
one-to-one correspondence with the points of $\pi$. This allows us to
identify the black vertices  of  $O'$ with the points of $\pi$. We want to check that the order induced on these
vertices by the clockwise prefix order of $T_x$ coincides with the order induced
by the abscissas of the points. Similarly, we want  to check that
the order induced on the
vertices by the counterclockwise prefix order of $T_y$ coincides with the order induced
by the ordinates  of the points. As the constructions of $\phi$ and
$\Psi$ are symmetric with respect to the first diagonal, it
suffices to prove the statement for the tree $T_x$. Since we are
comparing two total orders,  it
suffices to prove that if the vertex $v$ comes just after the vertex
$u$ in the prefix order of $T_x$, then $v$ lies to the right of $u$ in
 $\phi(\pi)$. Two cases occur.

If $u$ is not a leaf of $T_x$, it has a (unique, white) child, the first
child of which is $v$. As all edges in $\phi(\pi)$
point North-East, $v$ is to the right of $u$.

If $u$ is a leaf (Fig.~\ref{fig:inverse-proof}, right), let $w$ be its closest ancestor (necessarily white)
that has at least one child to the right of the branch leading to $u$.
By definition of the prefix order, the first of these children is $v$.  Observe that, by definition
of $\Psi$, the path of $T_x$ joining  $u$ to its ancestor $w$ is 
the \emm steepest, (unoriented) path of  $\phi(\pi)$ leading from $u$ to $w$. 
More precisely, it is obtained by starting at $u$, and choosing at
each time the steepest  down edge, until $w$ is reached. 
Assume $v$ is to the left of $u$. Then  it is also below $u$, and, as
$\phi(\pi)$ is a Hasse diagram,
there exist paths in $\phi(\pi)$ from $v$ to 
$u$. Take the steepest of these: that is, start from $u$, take the
steepest down edge that can be extended into a down path ending at
$v$, and iterate until $v$ is reached. Finally, add the edge $(w,v)$: this gives a
path joining  $w$ and  $u$ that is steeper than the one in $T_x$, a
contradiction. Hence $v$ lies to the right of $u$. \qed

\begin{figure}
\input{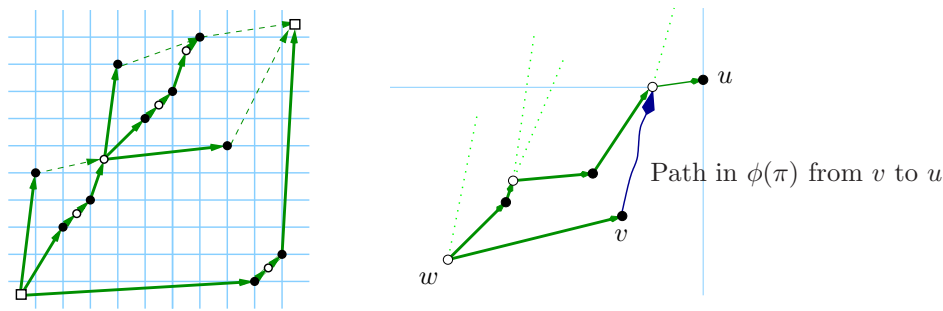}
\caption{Left: The tree $T_x$ obtained from the Baxter \p\ 
$\pi=  5\, 3\, 4\, 9\, 7\, 8\, 1\!0\, 6\, 1\, 2$
%
Right: Why the prefix order and the abscissa order coincide.}
\label{fig:inverse-proof}
\end{figure}

\section{Specializations}\label{sec:spe}

\subsection{Baxter \ps\ avoiding 2413 and rooted non-separable maps}
\label{sec:spe-2413}

The aim of this subsection is to prove
  Proposition~\ref{prop:bijection_maps}: if we restrict $\Phi$ to
  Baxter \ps\ avoiding 2413, add a root-edge from the source to the
  sink, and forget the orientations of all (non-root) edges, we obtain
  a bijection with rooted non-separable planar maps, which transforms
  standard parameters of \ps\ into standard parameters of maps.  Our first
  objective will be to describe the orientations corresponding via
  $\Phi$ to
  2413-avoiding Baxter \ps. Recall that the faces of a bipolar
  orientation have \emm left, and \emm right, vertices (Fig.~\ref{fig:bip}).
The following definition is illustrated in  	Fig.~\ref{fig:ROP}(a).
  \begin{Definition}
      Given a plane bipolar orientation $O$,  a \emph{right-oriented
	piece} (ROP) is
a $4$-tuple $(v_1,v_2,f_1,f_2)$ formed of two vertices $v_1$, $v_2$ and
	two inner faces $f_1$, $f_2$ of $O$ such that: 
\begin{itemize}
\item
 $v_1$ is the source of $f_1$ and is a left vertex of $f_2$,
\item
 $v_2$ is the sink of $f_2$ and is a right  vertex of $f_1$.
\end{itemize} 
A \emph{left-oriented piece} (LOP)
is defined similarly by swapping  'left' and  'right' in the
definition of a ROP. Consequently, a ROP in $O$ becomes a LOP in
$\mathrm{mir}(O)$ and vice-versa. 
  \end{Definition}

\begin{figure}[htb!]
\begin{center}
\input{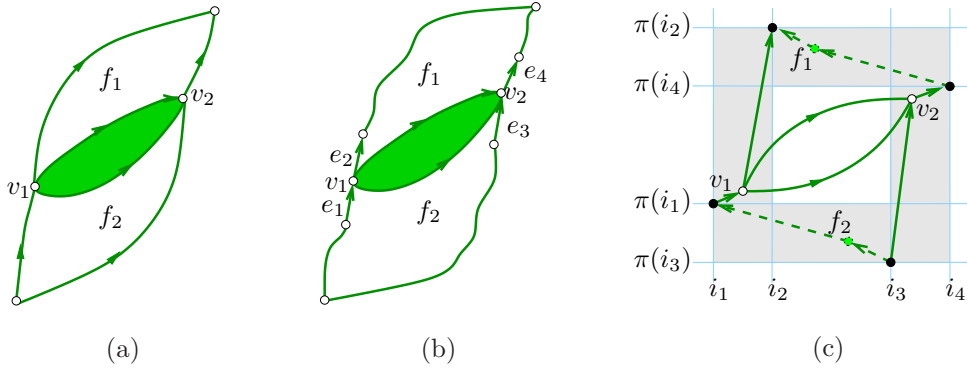}
\caption{(a) A right-oriented piece (ROP) in a plane bipolar
  orientation. (b) The four distinguished edges 
of a ROP. (c) A minimal pattern 2413 in a Baxter permutation yields
  a ROP in the associated plane bipolar orientation. The dashed edges
  come from $\phi(\rho(\pi))$.} 
\label{fig:ROP}
\end{center}
\end{figure}

\begin{Proposition}[\cite{OdM:1994}]\label{fact:ROP}
Every  bipolar planar map admits  a unique bipolar orientation
with no ROP, and a unique plane bipolar orientation
with no LOP.
\end{Proposition}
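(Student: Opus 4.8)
The plan is to realize the set $\OS(M)$ of all bipolar orientations of the fixed bipolar map $M$ as a finite partially ordered set in which the ROP-free orientations are exactly the minimal elements and the LOP-free orientations exactly the maximal ones, and then to show that this poset is a lattice, so that its minimum and maximum are unique. Since $\hat M$ is non-separable, $\OS(M)$ is non-empty, and by the observation following the definition of a ROP — a ROP of $O$ is a LOP of $\mir(O)$ and vice versa — it suffices to establish the ROP statement; the LOP statement then follows by applying it to the mirror map $\mir(M)$, whose ROP-free orientation pulls back under $\mir$ to the unique LOP-free orientation of $M$.

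The basic move is a single-edge reversal. Given a ROP $(v_1,v_2,f_1,f_2)$, the two faces $f_1$ and $f_2$ are the faces on either side of a single edge $e$ directed from $v_1$ to $v_2$; the defining incidences ($v_1$ the source of $f_1$, $v_2$ the sink of $f_2$) are exactly what guarantees that reversing $e$ produces again an acyclic orientation with the same poles, that is, another element of $\OS(M)$, in which the same quadruple is now a LOP. I would call such a reversal a \emph{down-flip} and declare $O'\le O$ when $O'$ is obtained from $O$ by a sequence of down-flips. Then an orientation is ROP-free precisely when no down-flip applies to it, i.e. when it is minimal, and LOP-free precisely when it is maximal. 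Existence of a minimal (ROP-free) element then reduces to well-foundedness of the down-flip relation on the finite set $\OS(M)$: each down-flip can be shown to strictly decrease a non-negative integer statistic, so iterating down-flips from any starting orientation terminates at a ROP-free one.

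The crux, and the step I expect to be the real obstacle, is \emph{uniqueness}. The efficient route is to prove that $\OS(M)$ with this order is a distributive lattice — this is precisely the classical theorem of Ossona de Mendez~\cite{OdM:1994} — so that a unique minimum and a unique maximum exist. If one prefers a direct argument, it suffices to establish local confluence of the down-flip rewriting system: whenever an orientation carries two distinct ROPs with edges $e\neq e'$, reversing $e$ keeps $e'$ reversible and the two down-flips commute, leading to a common orientation. Local confluence together with well-foundedness yields, by Newman's lemma, a unique normal form, and the normal forms are exactly the ROP-free orientations. Checking that two ROPs never obstruct each other is where the planarity of $M$ and the precise source/sink incidences in the definition of a ROP must be used, and this verification is the technical heart of the proposition.
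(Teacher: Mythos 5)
First, note that the paper does not prove this statement: it is imported from~\cite{OdM:1994}, together with the remark that the set of bipolar orientations of a fixed bipolar map forms a distributive lattice whose minimum (resp.\ maximum) is the ROP-free (resp.\ LOP-free) orientation. Your plan is an attempt to reconstruct that theory, and its outer layer is sound: the reduction of the LOP statement to the ROP statement via $\mir$ is exactly right, and organizing the orientations by flips is the right idea. The genuine gap is at the very first step, where you assert that for a ROP $(v_1,v_2,f_1,f_2)$ ``the two faces $f_1$ and $f_2$ are the faces on either side of a single edge $e$ directed from $v_1$ to $v_2$.'' The definition says nothing of the sort, and it is not a consequence of it: it only requires $v_1$ to be the source of $f_1$ and a left vertex of $f_2$, and $v_2$ to be the sink of $f_2$ and a right vertex of $f_1$. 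Thus the initial segment of the right border of $f_1$ (from its source $v_1$ to its right vertex $v_2$) and the final segment of the left border of $f_2$ (from its left vertex $v_1$ to its sink $v_2$) are two directed paths from $v_1$ to $v_2$, and nothing forces them to coincide, let alone to reduce to a single edge: around $v_1$ the outgoing edge opening the right border of $f_1$ and the outgoing edge of $v_1$ along the left border of $f_2$ may be separated by further outgoing edges, with other faces of $O$ lying between the two paths, and $v_1$, $v_2$ need not even be adjacent. Your ``down-flip'' is therefore defined only for the special, edge-adjacent ROPs.

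Consequently, even if your two deferred steps (a strictly decreasing statistic, and local confluence plus Newman's lemma) were carried out in full, the conclusion would be existence and uniqueness of an orientation admitting no \emph{edge-adjacent} ROP, which is not the proposition as stated. To close the gap you need the bridge lemma: any bipolar orientation containing a ROP contains an edge-adjacent one (equivalently, flip-minimal implies ROP-free). This lemma is genuinely nontrivial and is the actual content of the cited theory --- in Felsner's reformulation, a ROP is exactly a counterclockwise directed $4$-cycle of type vertex--face--vertex--face in the associated $2$-orientation of the angle graph, and one must show that a counterclockwise cycle forces a counterclockwise face cycle. Two further steps are asserted rather than proved: that reversing the edge of an edge-adjacent ROP preserves acyclicity requires a planarity argument (one must rule out a second directed path from $v_1$ to $v_2$, using that all other out-edges of $v_1$ lie locally on the $f_1$ side of $e$ while all other in-edges of $v_2$ lie on the $f_2$ side), and the confluence check must handle the cases where two flippable edges share a face. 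Since you yourself label the decisive verification ``the technical heart'' and leave it open, what you have is a plan pointing at the right theory, not a proof.
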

\noindent The set of bipolar 
orientations of a fixed bipolar map can actually be equipped with
a structure of distributive
lattice, the minimum (resp.~maximum) of which is the unique orientation
with no ROP (resp.~LOP)~\cite{OdM:1994}.

We can now characterize the image by $\Phi$ of 2413-avoiding Baxter \ps.
\begin{Proposition}
\label{lem:ROP}
 A Baxter permutation $\pi$ contains the pattern $2413$  if and only
 if the  bipolar orientation $O=\Phi(\pi)$ contains a
 ROP. Symmetrically, $\pi$ contains the pattern $3142$  if and only
 if  $O$ contains a  LOP.
\end{Proposition}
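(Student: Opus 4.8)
The plan is to prove the first equivalence, that $\pi$ contains $2413$ if and only if $O$ contains a ROP, and to deduce the second one by symmetry. Since $2413^{-1}=3142$, the permutation $\pi$ contains $3142$ exactly when $\pi^{-1}$ contains $2413$; by Proposition~\ref{prop:symmetries} one has $\Phi(\pi^{-1})=\mir(O)$, and by definition a ROP of $O$ is a LOP of $\mir(O)$ and conversely. So the $3142$/LOP statement is the $2413$/ROP statement applied to $\pi^{-1}$, and I concentrate on ROPs.

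I work inside the Hasse diagram $\phi(\pi)$, from which $O$ is obtained by erasing the black vertices, so that the points of $\pi$ are the edges of $O$ and the white vertices are the vertices of $O$. Each point $p=(i,\pi(i))$ has a unique predecessor $w^-(p)$ and a unique successor $w^+(p)$ in $\phi(\pi)$, both white, with $w^-(p)<p<w^+(p)$ for the product order; here $x(p)=i$ and $y(p)=\pi(i)$. The single geometric input I will use is the following \emph{face-incomparability} property: if $u$ lies on the left border and $u'$ on the right border of a bounded face $f$, both being distinct from the source and the sink of $f$, then $u$ and $u'$ are incomparable and $u$ is to the North-West of $u'$ (that is, $x(u)<x(u')$ and $y(u)>y(u')$). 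Indeed a monotone directed path joining two comparable such vertices would stay in the box they span, avoid the source and sink of $f$, and hence be forced to cross the vertex-free interior of $f$, which is impossible.

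\textbf{ROP $\Rightarrow 2413$.} Let $(v_1,v_2,f_1,f_2)$ be a ROP and let $e_1,e_2,e_3,e_4$ be its four distinguished edges (Fig.~\ref{fig:ROP}(b)): $e_2$ leaves $v_1$ along the left border of $f_1$, $e_1$ enters $v_1$ along the left border of $f_2$, $e_4$ leaves $v_2$ along the right border of $f_1$, and $e_3$ enters $v_2$ along the right border of $f_2$. Write $p_1,\dots,p_4$ for the corresponding points. From the incidences one reads off the covering relations $p_1<v_1<p_2$ and $p_3<v_2<p_4$, while $v_1<v_2$ because $v_2$ lies on the right border of $f_1$ above its source $v_1$. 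These already give $x(p_1)<x(p_2)$, $x(p_3)<x(p_4)$, $y(p_1)<y(p_2)$, $y(p_3)<y(p_4)$ and, via $y(p_1)<y(v_1)<y(v_2)<y(p_4)$, also $y(p_1)<y(p_4)$. The three remaining inequalities follow from the face-incomparability property: $p_2$ and $p_4$ lie on the two borders of $f_1$, so $y(p_4)<y(p_2)$; $p_1$ and $p_3$ lie on the two borders of $f_2$, so $y(p_3)<y(p_1)$; finally $p_2$ and $v_2$ lie on the two borders of $f_1$, giving $y(p_2)>y(v_2)>y(p_3)$ and $v_2\not\le p_2$. Since the unique out-edge of $p_3$ leads to $v_2$ and $v_2\not\le p_2$, there is no directed path from $p_3$ to $p_2$; as $y(p_2)>y(p_3)$ this forces $p_2$ and $p_3$ to be incomparable with $p_2$ to the North-West, i.e.\ $x(p_2)<x(p_3)$. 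Altogether $p_1,p_2,p_3,p_4$ occur at increasing abscissas with values ordered as $y(p_3)<y(p_1)<y(p_4)<y(p_2)$, that is, they form a $2413$.

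\textbf{$2413\Rightarrow$ ROP.} Here I would start from a \emph{minimal} occurrence of $2413$, tightening the four points (for instance minimizing the peak value $y(p_2)$, then maximizing $y(p_1)$ and $y(p_3)$ and minimizing $y(p_4)$, breaking ties by abscissas). The purpose of minimality is to force $w^+(p_1)=w^-(p_2)=:v_1$ and $w^+(p_3)=w^-(p_4)=:v_2$, so that $p_1,p_2$ are linked through the single white vertex $v_1$ and $p_3,p_4$ through $v_2$; one then lets $f_1$ be the face having $e_2$ on its left border and $f_2$ the face having $e_3$ on its right border, and verifies the four ROP incidences by running the computation of the previous paragraph backwards. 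I expect this converse to be the main obstacle. One must show that a minimal occurrence is genuinely consecutive around $v_1$ and $v_2$---that no other point of $\pi$ breaks the links $p_1\to v_1\to p_2$ or $p_3\to v_2\to p_4$, and that $f_1$ and $f_2$ are the two faces flanking a common directed path from $v_1$ to $v_2$. This is exactly where the Baxter hypothesis enters: it is the property (used already in the construction of $\phi$) that controls which points may lie to the North-East of a given one, and hence guarantees that the white vertices fall precisely where the incomparabilities among $p_1,\dots,p_4$ require.
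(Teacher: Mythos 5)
Your reduction of the $3142$/LOP statement to the $2413$/ROP statement (via $\pi^{-1}$, $\Phi(\pi^{-1})=\mir(O)$, and the fact that $\mir$ exchanges ROPs and LOPs) is exactly the paper's, and your proof of the direction ROP $\Rightarrow 2413$ is correct, though by a genuinely different route: the paper orders the abscissas and ordinates of $p_1,\dots,p_4$ by comparing paths in the trees $T_x$ and $T_y$ of Section~\ref{sec:inverse}, whereas you work with the product order in $\phi(\pi)$ and a face-incomparability lemma (an inner vertex of the left border and an inner vertex of the right border of a bounded face are incomparable, the left one lying North-West). That lemma is true and your derivation of the pattern from it is clean, but be aware that your one-line justification of the lemma is itself only a sketch: a monotone path avoiding the open face could a priori weave through vertices of the two borders, and the honest proof needs acyclicity plus a Jordan-curve argument (if the path reaches the right border, one of the two closed curves it forms with $\partial f$ traps $s_f$ or $t_f$, and a path from the global source or to the global sink then closes a directed cycle).

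The genuine gap is the converse, $2413\Rightarrow$ ROP, which you explicitly leave as a plan. Two steps are missing. First, the proof that a minimal occurrence forces the two length-$2$ links $p_1\to v_1\to p_2$ and $p_3\to v_2\to p_4$: in the paper this follows from the fact that the minimal bounding rectangle has empty vertical and horizontal strips between the relevant columns and rows, combined with the bipartiteness of $\phi(\pi)$ and the in/out-degree~$1$ of black vertices. Second, and more seriously, the verification of the four ROP incidences. ``Running the forward computation backwards'' cannot work: the forward direction derives coordinate inequalities \emph{from} face incidences, and the inequalities alone do not determine the incidences --- one must prove that $v_1$ and $v_2$ are both incident to both faces, that $v_1$ is the \emph{source} of $f_1$ and a \emph{left} vertex of $f_2$, and that $v_2$ is the \emph{sink} of $f_2$ and a \emph{right} vertex of $f_1$. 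The paper's tool for this, absent from your plan, is duality: it applies the construction to $\rho(\pi)$, uses $\Phi(\rho(\pi))=O^*$ (Proposition~\ref{prop:symmetries}) to produce $f_1,f_2$ as the white vertices on the length-$2$ paths from $\rho(p_4)$ to $\rho(p_2)$ and from $\rho(p_3)$ to $\rho(p_1)$ in $\phi(\rho(\pi))$, reads off which borders of $f_1,f_2$ carry $e_1,\dots,e_4$ from the dual-edge correspondence, and finally pins down the source/sink conditions by following a directed path from $p_1$ to $p_4$ (whose second vertex is $v_1$ and next-to-last vertex is $v_2$) and the clockwise organization of edges around $v_1$ and $v_2$. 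Your closing appeal to ``the Baxter hypothesis'' as the missing ingredient is not the right diagnosis --- that hypothesis is already baked into the existence and properties of $\phi$; what is missing is precisely this duality-plus-path argument.
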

Simple examples are provided by $\pi=25314$ and $\pi=41352$. The
corresponding orientations (with a root edge added) are those of
Fig.~\ref{fig:K4}.
\begin{proof}
Assume  $O$ contains a ROP $(v_1,v_2,f_1,f_2)$. Denote
$e_1,e_2,e_3,e_4$  the four edges  shown in Fig.~\ref{fig:ROP}(b). To each
of them corresponds a point $p_i$ of the diagram of $\pi$, with $1\le
i \le 4$. We will
prove that these points form an occurrence of 2413. Recall that the
 order of the abscissas and ordinates of these points is
obtained from the trees $T_x$ and $T_y$ defined in Section~\ref{sec:inverse}.
Denote by $P(e)$ the path of $T_x$ that joins  (the middle of) the
edge $e\in O$ to the source. By definition of $T_x$, the point of $\pi$
corresponding to $e$ occurs in $\pi$ to the left of the point corresponding to
$e'$ 
if and only if either $e$ lies on the path $P(e')$, or $P(e)$ is on the
left of $P(e')$ when the two paths meet. From this observation and the
configuration of a ROP, it is clear that the $x$-order of the points
$p_i$  is $p_1,p_2,p_3,p_4$. By similar arguments,
the $y$-order of these  points is $p_3,p_1,p_4,p_2$. 
Hence they form an occurrence of 2413.

Conversely, let $\pi$ be a Baxter permutation containing an occurrence
$p_1, p_2, p_3, p_4$ of the pattern 2413. This means that $p_j=(i_j,
\pi(i_j))$ with $i_1 <i_2 <i_3 <i_4$ and
$\pi(i_3)<\pi(i_1)<\pi(i_4)<\pi(i_2)$.
We can assume that the $p_i$ form a  \emm minimal, occurrence of 2413:
that is,  the bounding rectangle $R$ of the $p_i$ does not
contain any other occurrence of the pattern.  Then it is easy to see
that  no point of $R$ lies between columns
$i_1$ and $i_2$ (this would create a smaller pattern). Similarly, no
point of $R$ lies between columns $i_3$ 
and $i_4$, or between rows $\pi(i_3)$ and   $\pi(i_1)$, or between
rows $\pi(i_4)$ and   $\pi(i_2)$.  The empty areas are
shaded in Fig.~\ref{fig:ROP}(c). 

We now want to prove that $O$ contains a ROP. 
Consider  an oriented path
going from $p_1$ to $p_2$ in the embedded Hasse diagram $\phi(\pi)$. 
As $\phi(\pi)$ is bipartite and  there is no  point in $R$ between
columns $i_1$ and $i_2$,  this path has
length 2, and goes  from $p_1$ to $p_2$ via a unique white vertex, 
which we denote by $v_1$. As the $p_i$ have in- and out-degree 1, there is
no other path between $p_1$ and $p_2$.
Similarly, there is a unique oriented path
going from $p_3$ to $p_4$, which has length 2. Denote  by $v_2$ the (unique)
 white vertex of this path. 

We will exhibit a ROP whose vertices are  $v_1$ and $v_2$.
 To find the faces of this ROP, we consider the Baxter \p\
 $\pi^*=\rho(\pi)$ obtained by a clockwise rotation of $\pi$ by 90
 degrees. After this rotation, the points $\rho(p_i)$ still form a minimal
 occurrence of 2413. The above arguments imply that there exists in
 $\phi(\pi^*)$ a unique path from $\rho(p_3)$ to $\rho(p_1)$, which
 has length 2 and  contains only one white vertex $w_2$. Similarly,
there exists in
 $\phi(\pi^*)$ a unique path from $\rho(p_4)$ to $\rho(p_2)$, which
 has length 2 and  contains only one white vertex $w_1$. But
 $\Phi(\pi^*)$ is the dual orientation $O^*$
 (Proposition~\ref{prop:symmetries}). Let $f_1$ and  $f_2$ be the faces
 of $O$ that are the duals of $w_1$ and $w_2$.  We claim that $(v_1,
 v_2, f_1, f_2)$ is a ROP of $O$.

Consider the superposition of the edges of $\phi(\pi)$ and of the edges of
 $\phi(\pi^*)$, rotated by 90 degrees counterclockwise (dashed
 lines in Fig.~\ref{fig:ROP}(c)).
Let $e_i$ be the  edge of $O$ corresponding to
the point $p_i$, for $1\le i \le 4$. By
Proposition~\ref{prop:symmetries}, the dual edge $e_i^*$ corresponds
to the point $\rho(p_i)$. As 
$e_1^*$ starts from $w_2$, the edge $e_1$ lies on the left border of  the
face $f_2$ in $O$. 
 By considering the points $p_2, p_3$ and $ p_4$, one proves similarly
that $e_2$ lies on the left border of  $f_1$,  that $e_3$ lies on the
right border of $f_2$ and $e_4$ on the right border of $f_1$. In
particular, both $v_1$ and $v_2$ are  adjacent to $f_1$ and $f_2$.

As $p_4$ lies North-East of $p_1$, there is in $\phi(\pi)$ an oriented path
from $p_1$
to $p_4$. As $p_1$ has outdegree 1, the second vertex on this path is
$v_1$, and similarly, its next-to-last vertex is $v_2$. Thus the edge
$(v_1,p_2)$ of $\phi(\pi)$ is followed, in clockwise order around $v_1$, by another
outgoing edge: this implies that $v_1$ is the source of the face $f_1$. At the
other end of the path, 
we observe that the edge $(p_3,v_2)$ is
followed, in clockwise order around $v_2$, by another ingoing edge:
thus $v_2$ is the sink of $f_2$. 

The edge $e_1$ ends at $v_1$ and is on the left border of $f_2$:
as the sink of $f_2$ is $v_2$ (and $v_2\not = v_1$), $v_1$ is a
left vertex of $f_2$. Symmetrically, $v_2$ is a right vertex of
$f_1$. 

  Hence $(v_1,v_2,f_1,f_2)$ is a ROP. 

\medskip
It remains to prove the statement on 3142-avoiding Baxter \ps.
  Recall  that $\Phi(\pi^{-1})=\mir(O)$ and observe that
  $2413^{-1}=3142$. Note also that the map $\mir$ 
transforms right-oriented
pieces on left-oriented pieces. Consequently, a Baxter \p\ $\pi$
avoids 3142 if and only if the orientation $\Phi(\pi)$ has no
LOP.

\end{proof}

\noindent{\em Proof of  Proposition~\rm\ref{prop:bijection_maps}.} The restriction of $\Phi$ to
2413-avoiding Baxter \ps\ is a bijection between these \ps\ and
bipolar plane orientations with no ROP, which transforms standard
parameters as described in Theorem~\ref{thm:phibiject}. By Proposition~\ref{fact:ROP},
orientations with no ROP are in bijection with non-separable planar
maps (the bijection consists in adding a root-edge from the source to
the sink, and forgetting the orientation of all non-root edges). This
bijection increases by 1 the edge number, the degrees of the source
and the sink, and transforms the right and left outer degrees of the
orientation into the degrees of the faces lying, respectively, to the
left and right of the root-edge, minus
one. Proposition~\ref{prop:bijection_maps} follows. 

\subsection{Permutations avoiding 2413 and 3142,  and    series-parallel maps}
\label{sec:spe-2}
We now prove Proposition~\ref{prop:SP}. We have said that   a rooted
non-separable planar map $M$ is  \emph{series-parallel} if it does not
contain  $K_4$ as a minor. Let $\check M$ be the corresponding bipolar
map. We will
say that  $\check M$ itself is series-parallel.
By adapting the proof given for graphs in~\cite{BoGiKaNo07}, it is not
hard to see that a bipolar map  $\check M$ is series-parallel 
if and only if it can be constructed recursively,  starting from the 
single-edge map, by  applying a sequence of series and parallel
compositions:
\begin{itemize}
\item the \emm series composition, of two series-parallel bipolar maps $\check M_1$
  and $\check M_2$  is obtained by identifying the sink of $\check M_1$  with the
  source of $\check M_2$ (see Fig.~\ref{fig:SP} (b)),
\item the  \emm parallel composition, of 
  $\check M_1$ and $\check M_2$ is obtained by  putting $\check M_2$ to the right 
  of $\check M_1$,  and then identifying the sources of $\check M_1$ and $\check M_2$, 
  as well as their sinks (Fig.~\ref{fig:SP} (c)).
\end{itemize}

\begin{figure}[htb!]
\begin{center}
\input{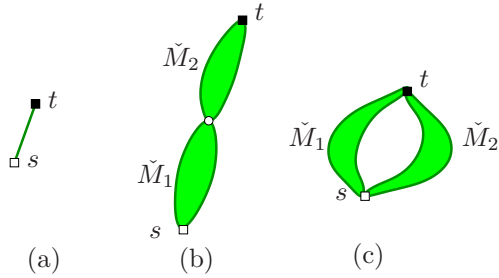}
\caption{The  operations that build all series-parallel bipolar maps: (a) taking  a single edge, (b) a series composition, (c) a parallel composition.}
\label{fig:SP}
\end{center}
\end{figure}

By Proposition~\ref{lem:ROP}, a Baxter \p\ $\pi$ avoids  both patterns
2413 and 3142 if and only if the corresponding plane bipolar
orientation has  no ROP nor LOP. We
have seen in Proposition~\ref{fact:ROP} that every bipolar map $\check M$
admits a unique orientation with no ROP and a unique orientation with
no LOP, which are respectively the minimal and maximal element of the
lattice of orientations of $\check M$. Thus $\check M$
admits an orientation with no ROP nor LOP if and only if it has a
unique bipolar orientation. Hence to prove Proposition~\ref{prop:SP}, it
suffices to establish the following lemma. 

\begin{Lemma}
\label{lem:SP}
A  bipolar map $\check M$ admits a unique bipolar orientation if
and only if it is series-parallel. 
\end{Lemma}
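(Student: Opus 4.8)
My plan is to prove the stated equivalence directly, using two \emph{gluing lemmas} that I would establish first, since they drive both directions and also make the orientation count multiplicative. The first: if $\check M=\check M_1\cdot \check M_2$ is a series composition at a vertex $v$, then every bipolar orientation of $\check M$ restricts to a bipolar orientation of $\check M_1$ (poles $s,v$) and one of $\check M_2$ (poles $v,t$), and conversely any such pair glues back. The key point is that $v$ is a cut vertex, so in an acyclic orientation with unique source $s$ and sink $t$ all edges of $\check M_1$ at $v$ must be incoming and all edges of $\check M_2$ at $v$ outgoing: an edge leaving $v$ into $\check M_1$ would have to return to $v$ to reach $t$, creating a cycle. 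The second: if $\check M=\check M_1\| \check M_2$ is a parallel composition, a bipolar orientation of $\check M$ restricts to bipolar orientations of $\check M_1$ and $\check M_2$, both with poles $s,t$, and conversely; here a non-polar vertex of $\check M_i$ has all its edges inside $\check M_i$, so it cannot become an extra source or sink, and no cycle can use both parts since every $s$--$t$ arc runs from $s$ to $t$. In both cases the bipolar orientations of $\check M$ biject with pairs of bipolar orientations of the two parts.

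For the implication ``series-parallel $\Rightarrow$ unique'' I would induct on the number of edges. The single edge has the unique orientation $s\to t$. Otherwise $\check M$ is a top-level series or parallel composition of two smaller series-parallel maps $\check M_1,\check M_2$; by the gluing lemmas the number of bipolar orientations of $\check M$ is the product of those of the parts, hence $1\cdot 1=1$ by induction.

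For the converse I would prove the contrapositive: if $\check M$ is \emph{not} series-parallel it has at least two bipolar orientations. By the characterization used above, $\check M$ then contains $K_4$ as a minor. If $\check M$ has a cut vertex or a parallel split, the $K_4$ minor lies in one part, which is again not series-parallel; the gluing lemmas let me lift two orientations of that part (obtained by induction) to two orientations of $\check M$ by freezing an arbitrary orientation on the other part. It then remains to treat the irreducible situation: a $2$-connected bipolar map admitting no nontrivial series or parallel decomposition. Here I would invoke the decomposition of $2$-connected maps over their $2$-cuts to extract a $3$-connected component $H$ on at least four vertices, which contains a subdivision of $K_4$; exactly as for $K_4$ itself, the path playing the role of the ``middle'' edge can be reversed without creating a directed cycle and without moving the poles, giving a second bipolar orientation of $H$, which I then lift through the $2$-cuts.

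The main obstacle is precisely this last, irreducible case: locating inside a genuinely non-series-parallel $\check M$ a clean $3$-connected block carrying a reversible edge, and verifying that reversing it preserves acyclicity and the prescribed poles once the block sits inside the whole map. Everything else — the two gluing lemmas and the inductive bookkeeping — is routine, and it is exactly the multiplicativity furnished by the gluing lemmas that makes a single ``rigid'' $K_4$-block enough to break uniqueness.
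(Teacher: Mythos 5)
Your gluing lemmas and the forward direction are sound and essentially match the paper, which disposes of that direction by appealing to the recursive construction of series-parallel bipolar maps (citing \cite{FOR:95}). The converse, however, has a genuine gap, rooted in a definitional subtlety: in this paper a bipolar map $\check M$ is series-parallel when the \emph{rooted} map $M$ --- that is, $\check M$ \emph{plus} the root edge from $s$ to $t$ --- has no $K_4$ minor. So ``not series-parallel'' only yields a $K_4$ minor of $M$; that minor may use the root edge, and $\check M$ itself can be entirely $K_4$-minor-free. Concretely, take $\check M=K_{2,3}$ with parts $\{a,b\}$ and $\{s,x,t\}$ and poles $s,t$: adding the edge $st$ and contracting $ax$ produces $K_4$, so $\check M$ is not series-parallel, yet $K_{2,3}$ itself has no $K_4$ minor. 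This example defeats your irreducible case outright: it is $2$-connected, has no cut vertex and no parallel split at $\{s,t\}$, and every triconnected component in its decomposition over $2$-cuts is a bond or a polygon --- there is no $3$-connected component on four or more vertices and no $K_4$ subdivision, so your reversal argument has nothing to act on. Nevertheless this map has exactly two bipolar orientations (the two edges at the degree-two vertex $x$ can be directed $a\to x\to b$ or $b\to x\to a$), which is what the lemma demands and what your method fails to produce. A smaller slip occurs in the reducible case: under a parallel split the $K_4$ minor need not ``lie in one part'' (add the edge $st$ in parallel to the $K_{2,3}$ above: the minor uses both parts); the correct argument is that if both parts were series-parallel then so would be their composition, hence one part is not, and then you induct.

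The paper sidesteps all of this by working with $M$ rather than $\check M$: since $M$ contains a $K_4$ minor, the two bipolar orientations of $K_4$ (Fig.~\ref{fig:K4}) extend, by the \emph{Extension Lemma} of \cite{FOR:95}, to two distinct bipolar orientations of $M$, which restrict to two distinct bipolar orientations of $\check M$ once the root edge (oriented from $s$ to $t$ in both) is deleted. If you want to keep an elementary, decomposition-based proof, you must run the induction on $M$ rather than on $\check M$ (there an R-node, i.e.\ a $3$-connected component on at least four vertices, is guaranteed to exist), and you will still need a lifting statement strong enough to push a reversal inside one component out to a bipolar orientation of the whole map --- which is exactly what the Extension Lemma provides.
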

\begin{proof}
From the recursive  construction of  series-parallel  bipolar maps
shown in Fig.~\ref{fig:SP},
it is easily checked that such a map admits a unique
bipolar orientation (see also~\cite[Remark 6.2]{FOR:95}). 

Conversely, assume that $\check M$ is  not
series-parallel. This means that the corresponding rooted map $M$
contains $K_4$ as a minor.   
Observe that  $K_4$  admits exactly two bipolar orientations, show in
Fig.~\ref{fig:K4}.
From the \emph{Extension Lemma} of~\cite{FOR:95}, the two
bipolar orientations of $K_4$ can be extended to two distinct bipolar
orientations of $M$, and thus of $\check M$.  
\end{proof}

This concludes the proof of Proposition~\ref{prop:SP}.

\begin{figure}[htb!]
\begin{center}
\input{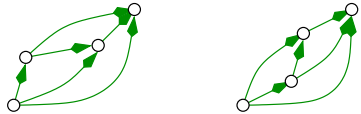}
\caption{The two bipolar orientations of $K_4$. The first one contains
a ROP, the other one a LOP.}
\label{fig:K4}
\end{center}
\end{figure}

\subsection{A link with a construction  of Dulucq, Gire  and West}
\label{sec:dulucq}
We have seen that the bijection $\Phi$ sends 
3142-avoiding Baxter \ps\
onto orientations with no LOP. Clearly, 3142-avoiding Baxter \ps\ form a
subtree of the generating tree $\cT_b$ of Fig.~\ref{fig:GT-baxter}: if $\pi$ avoids
3142, deleting its largest entry will not create an occurrence of this
pattern. As $\Phi$ is 
the canonical bijection between $\cT_b$ and the tree $\cT_o$ of
orientations, this implies that orientations with no LOP form a
subtree of $\cT_o$. 
%
In this subtree, replace every orientation by the
underlying bipolar map $\check M$, and then by the corresponding
non-separable rooted map $M$ (this means adding a root-edge to $\check
M$). This gives a generating tree $\cT_m$ for rooted non-separable maps.

In this section we give a description of this tree directly in terms of maps
(rather than orientations). We then  observe that, up to simple symmetries,
this is the tree that was introduced in~\cite{DGW:96} to prove that
non-separable planar maps are in bijection with permutations avoiding
the patterns 2413 and $41\bar{3}52$---equivalently, with 2413-avoiding Baxter
\ps. In accordance with~\cite{DGW:96}, where the children of a node of
the generating tree are not explicitly ordered, we will only describe
the non-embedded generating tree.  That is, we only explain how to
determine the parent of a given  rooted non-separable map.

Let $M$ be a rooted non-separable map. Recall that, by convention, the infinite
face lies to the right of the root-edge. Let   $e$ be the  edge
following the root-edge in counterclockwise order around the infinite
face. We say that $M$ is \emph{valid}  if $M\setminus e$ is
separable.

 \begin{Proposition}
   Let $M$ be a map of the tree $\cT_m$ having at least three edges,
   and let $e$ be defined as above. 
 The parent of $M$ in $\cTm$ is obtained by contracting $e$ if $M$ is
   valid, and  deleting it otherwise.
 \end{Proposition}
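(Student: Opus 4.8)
The plan is to read off the parent operation from the generating tree $\cT_o$, since by construction $\cTm$ is obtained from the subtree of no-LOP orientations by replacing each orientation $O$ by the rooted non-separable map $M=\widehat{\check M}$, where $\check M$ is the bipolar map underlying $O$ (so that $O$ is \emph{the} no-LOP orientation of $\check M$). Recall from Section~\ref{sec:genbip} that the parent of $O$ in $\cT_o$ is obtained by taking the last edge $e$ of the left border of $O$, with tail $v$, and \emph{contracting} $e$ if $v$ has outdegree $1$, \emph{deleting} $e$ otherwise. Thus it suffices to prove two things: first, that this edge $e$ is the edge described in the statement; and second, that ``$v$ has outdegree $1$ in $O$'' is equivalent to ``$M$ is valid'', i.e.\ to ``$M\setminus e$ is separable''.

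For the first point, I would note that the root edge of $M=\widehat{\check M}$ is added in the outer face of $\check M$ so that the infinite face of $M$ lies to its right; consequently this infinite face is bounded by the root edge together with the left border of $O$, traversed from the sink back to the source. Hence the edge of $M$ that follows the root edge in counterclockwise order around the infinite face is exactly the last edge $e$ of the left border, whose tail is $v$ and whose head is the sink $t$. Moreover, since $e$ is distinct from the root edge, contracting (resp.\ deleting) $e$ in $O$ and then re-adding the root edge gives the same rooted map as contracting (resp.\ deleting) $e$ directly in $M$; so the parent of $M$ in $\cTm$ is obtained from $M$ by contracting $e$ when $v$ has outdegree $1$, and by deleting $e$ otherwise.

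For the second point I would use the standard planarity criterion: in a $2$-connected plane map, deleting an edge $e$ bordered by faces $f_1,f_2$ yields a separable map exactly when $f_1$ and $f_2$ share a vertex other than the two endpoints of $e$ (that common vertex then becomes a cut vertex). Here $e$ is bordered by the infinite face of $M$ and by the inner face $f$ lying just to the east of $e$, whose sink is $t$. If $v$ has outdegree $\ge 2$, then $v$ is the \emph{source} of $f$, the left boundary of $f$ is the single edge $e$, and its remaining boundary vertices lie on neither the left border nor the root edge, hence not on the infinite face; so $f$ meets the infinite face only in $v$ and $t$, and $M\setminus e$ is non-separable, in agreement with the ``delete'' rule. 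If $v$ has outdegree $1$, then $e$ is the unique outgoing edge of $v$, so $v$ is a \emph{left} vertex (not the source) of $f$, and $f$ has a source $u\neq v$.

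It remains to show that when $v$ has outdegree $1$ the two faces \emph{do} share a third vertex, and this is exactly where the no-LOP hypothesis enters; I expect this to be the main obstacle. Note that the implication fails for general orientations: an orientation containing a LOP may have $v$ of outdegree $1$ while $M\setminus e$ stays $2$-connected, so the no-LOP condition is essential. The plan is to argue by contradiction: assume the infinite face and $f$ meet only in $v$ and $t$. Following the left boundary of $f$ downwards from $v$ and comparing it with the left border of $O$ (which reaches $v$ through a \emph{different} incoming edge, since $u\neq v$ forces $v$ to have at least two incoming edges), one isolates, among the faces incident to $v$ lying between these two boundaries, a face $f'$ with sink $v$ on whose east boundary $u$ occurs. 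The quadruple $(u,v,f,f')$ then satisfies the defining incidences of a left-oriented piece ($u$ is the source of $f$ and a right vertex of $f'$, while $v$ is the sink of $f'$ and a left vertex of $f$), contradicting that $O$ has no LOP. Hence $f$ and the infinite face share a vertex other than $v,t$, so $M\setminus e$ is separable, in agreement with the ``contract'' rule. Combining the two points proves the proposition.
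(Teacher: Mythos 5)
Your reduction --- identify $e$ with the last edge of the left border of $O$, then prove ``$v$ has outdegree $1$ if and only if $M\setminus e$ is separable'' --- is exactly the paper's, and your deletion case is correct, though argued differently: the paper simply notes that deleting $e$ yields the parent orientation $\overline O$, so $M\setminus e$ admits a bipolar orientation and is therefore non-separable, whereas you invoke the face-sharing criterion for $2$-connectivity. That criterion is valid, and your unproved sub-claim (no interior vertex of the right boundary of $f$ lies on the left border of $O$) does hold, by acyclicity: such a vertex would admit a directed path from $v$ and a directed path back to $v$. Two small slips in the contraction case: it is not ``$u\neq v$'' that forces $v$ to have indegree at least $2$, but your standing assumption that the infinite face and $f$ share only $v$ and $t$ (if $v$ had indegree $1$, the tail of its unique incoming edge would be a third shared vertex); and you silently assume $f$ is an inner face of $O$, whereas the face east of $e$ may be the face of $M$ containing the root edge, a sub-case the paper treats separately (the shared vertex is then the source $s$).

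The genuine gap is the step you flag as the main obstacle: the face $f'$ you propose to ``isolate'' need not exist, because no face with sink $v$ need contain $u$ at all. Concretely, take the plane bipolar orientation with vertices $s,a,z,u,x,y,v,t$, left border $s\to a\to v\to t$, right border $s\to u\to y\to t$, and interior edges $a\to z$, $z\to x$, $u\to x$, $x\to v$ (all ten edges drawn going upward). Here $e=(v,t)$, $v$ has outdegree $1$ and indegree $2$, the face $f$ east of $e$ has source $u$, left boundary $u\to x\to v\to t$, right boundary $u\to y\to t$, and $f$ meets the left border only at $v$ and $t$ --- so every hypothesis of your contradiction argument holds. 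Yet the unique face with sink $v$ is bounded by $a\to v$ and $a\to z\to x\to v$, and $u$ is not on it: no LOP of the form $(u,v,f,f')$ exists. (This orientation is no counterexample to the proposition itself, since it does contain a LOP, namely $(u,x,f,h)$ with $h$ the face bounded by $s\to a\to z\to x$ and $s\to u\to x$; but that LOP is not incident to $v$, and in larger examples of the same kind the forced LOP sits arbitrarily far from $v$.) This non-locality is precisely why the paper does not argue locally: it isolates Lemma~\ref{lem:equiv} (the source $s_1$ of the face east of $e$ lies on the left outer border of $O$) and proves it by a minimal-counterexample argument, in which either a LOP is found between $f_1$ and the left face of $s_1$, or a strictly smaller violating orientation is produced; only then does it conclude that $s_1$ is a cut vertex of $M\setminus e$. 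Your proof cannot be completed as sketched without essentially establishing that lemma.
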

 \begin{proof}
   Given the description of the parent of an orientation $O$ in the
   tree $\cT_o$, and the bijection between orientations with no LOP and
   rooted non-separable maps, what
   we have to prove is the following statement: if $O$ is a
   bipolar orientation with no LOP, $e$ the last edge on its left
   border, $v$ the starting point of $e$,
and $M$ the
   associated rooted map, then $v$ has outdegree 1 in $O$ if and only
   if  $M$ is valid.

Assume first that  $v$ has outdegree 2 or more.  The parent
$\overline O$ of $O$ in $\cT_o$ is obtained by deleting $e$ from $O$. The
corresponding rooted map (obtained by adding to $\overline O$ a root-edge from $s$ to
$t$) is $M\setminus e$. It is non-separable (as it admits a bipolar
orientation). This means  that $M$ is not valid.

 Assume now that  $v$ has outdegree 1.  Let $f_1$
 be the  face lying to the right of  $e$ in $O$. If $f_1$ is the
 infinite face of $O$, then $M\setminus e$ is separable (the
 source is a separating vertex), so that $M$ is valid. Now assume that $f_1$ is
 a bounded face, and let $s_1$ be 
 its source. We need  the following lemma, the proof of which is
 delayed for the moment. 

\begin{Lemma}\label{lem:equiv}
The vertex $s_1$ is on the left outer border of $O$. 
\end{Lemma}
This lemma implies that $s_1$ is a separating vertex in $M\setminus
e$. Indeed, deleting $e$ and $s_1$ from $M$ disconnects the vertex $v$ from the
source (Fig.~\ref{fig:valid}). Hence $M$ is valid.
\end{proof}

\begin{figure}[htb!]
\begin{center}
\input{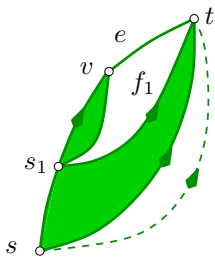}
\caption{If $v$ has out degree 1, the rooted map $M$ is valid.}
\label{fig:valid}
\end{center}
\end{figure}

Up to minor changes (declaring $e$ as the root edge, and taking the
mirror of maps), we have recovered the generating tree $\cT_m$ of rooted
non-separable planar maps given in~\cite[Section~2.1]{DGW:96}. In that
paper, it is shown that $\cT_m$ is isomorphic to the generating tree
of 2413-avoiding Baxter \ps. The canonical (and recursive) bijection
between these \ps\ and non-separable planar maps that results from the
existence of  this isomorphism is not described explicitly
in~\cite{DGW:96}. Our paper gives a non-recursive description of this bijection
 (up to elementary symmetries), as a
special case of a more general correspondence. Note that the structure
of the tree $\cT_m$ is not as simple as that of the more general tree
$\cT_o$. In particular, the description given in~\cite{DGW:96}
involves an unbounded number of labels, while $\cT_o$ is  isomorphic
to a simple tree $\cT$ with two integers labels. 

\medskip
We still have to prove Lemma~\ref{lem:equiv}.

\noindent\emph{Proof of Lemma~\ref{lem:equiv}.}
Assume the lemma is wrong, and that $O$ is a minimal counterexample
(in terms of the edge number). 
By assumption,  $s_1$ is not on the left outer border of $O$. Thus the
left  face of $s_1$, called $f_2$, is finite, with source $s_2$ and
sink $t_2$ (Fig.~\ref{fig:equiv}(a)).
 The left border of $f_1$ is an oriented path $P_0$ going from $s_1$
 to $t$, which ends with the edge $e$. Let $P_1$ be the \emm leftmost,
 oriented path from $s_1$ to $t$:  at each vertex, it takes the
 leftmost outgoing edge. By planarity, the last edge of $P_1$ is also $e$.
  The first steps of $P_1$ follow the right border of  $f_2$,  from $s_1$ to $t_2$. 
 Let $O_1$ be the set of edges lying betwen $P_1\setminus e$ and
 $P_0\setminus e$ 
(both paths are included in $O_1$).   Then
$O_1$ is a bipolar orientation of source $s_1$ and sink $v$.
 
 Observe that $t_2$ is not incident to $f_1$, otherwise $(s_1,t_2,f_1,f_2)$
 would form a LOP. Hence the right  face of $t_2$, called  $f_3$,
 is a face of  $O_1$  (see Fig.~\ref{fig:equiv}(b), where only the
 orientation $O_1$ and the faces $f_1$ and $f_2$ are shown). Let $s_3$ be its source, and $t_3$ its sink.
As $s_1$ is the source of  $O_1$,  there exists an oriented path $P$
from $s_1$ to $s_3$. Consider the following two paths that go from
 $s_2$ to $t_3$: the first one follows the left 
 border of $f_2$ and then the portion of the left border of $f_3$ from
 $t_2$ to $t_3$; the second follows the right border of $f_2$ up to
 $s_1$, then the path $P$, and finally the right border of $f_3$. Let
 $O_2$ be the plane
 bipolar orientation  formed of the edges lying between these
 two paths (Fig.~\ref{fig:equiv}(c)).

The orientation $O_2$ has fewer edges than $O$, and we claim that it
is a counterexample to the lemma.  The last edge $e'$  of 
its left outer border  is also  
 the last edge of the left border of $f_3$. Hence its starting point lies between $t_2$ and $t_3$, and has outdegree $1$ in
 $O_2$. The face to the right of $e'$ is  $f_3$. 
Moreover, the source $s_3$ of $f_3$ is not on the left outer border of $O_2$, 
as it is separated from this path by the face $f_2$. 
 Hence $O_2$ is
a smaller counterexample than $O$, which yields a contradiction.     
 \qed

\begin{figure}[htb!]
\begin{center}
\input{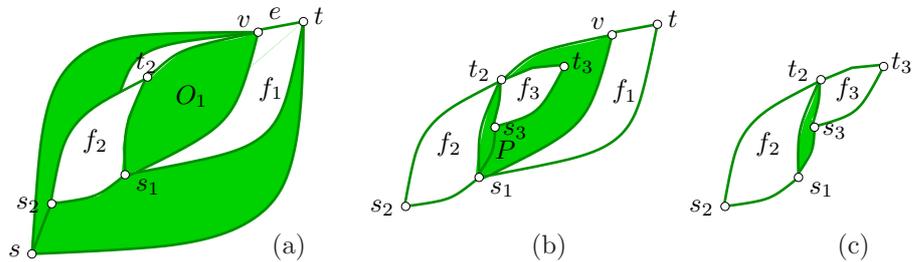}
\caption{(a) A plane bipolar orientation with no LOP such that $v$ has
  outdegree 1. All edges are North-East oriented. The white areas are  faces.
The source
of $f_1$ has to be on the left outer border, otherwise, as shown in (b)-(c), a smaller counterexample can be produced.}
\label{fig:equiv}
\end{center}
\end{figure}

\section{Final comments}
It is natural to study the restriction of our bijection $\Phi$  (or
its inverse $\Phi^{-1}$) to interesting subclasses of permutations (or
orientations), for instance those that are counted by simple
numbers. This is the case for  alternating and
 doubly-alternating Baxter \ps~\cite{CDV:86,DG:96,GL:00}, or for
orientations of triangulations~\cite{tutte-12}. Recently, we have also
discovered a new and intriguing result, which deals with
fixed-point-free Baxter involutions. It is easy to construct for them
a generating tree, analogous to the tree $T_b$ of Baxter \ps\
(Fig.~\ref{fig:GT-baxter}). One goes from a node to its parent by deleting the
cycle containing the largest entry. Again, the tree is isomorphic
to a generating tree with two labels (like~\eqref{gtij}), that encode the number of
lr-maxima and rl-maxima. Using the techniques of~\cite{mbm-motifs}, we have
proved that the number
of such involutions of length $2n$ is
$$
\frac{3. \  2^{n-1}}{(n+1)(n+2)}{{2n}\choose n},
$$
which is also known to count Eulerian planar maps with $n$ edges.

After a 90 degrees rotation, Baxter involutions having no fixed point are
described by permutation diagrams that are symmetric with respect to
the second diagonal, and have no point on this diagonal. Via the
bijection $\Phi$, they correspond to  plane orientations
having a single source, but possibly several sinks, lying in the
outer face (Fig.~\ref{fig:involution}). It would be interesting to
have a combinatorial understanding of the above formula (and of the
various refinements of it that we have obtained).

\begin{figure}[htb!]
\begin{center}
\input{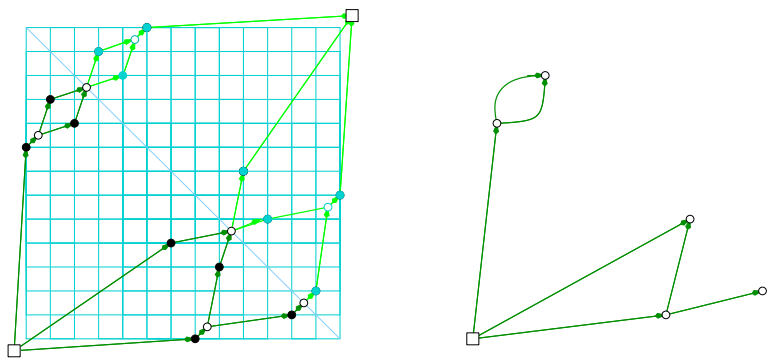}
\caption{A fixed-point-free  Baxter involution, rotated by 90 degrees,
  and the corresponding   plane orientation.}
\label{fig:involution}
\end{center}
\end{figure}

\bibliographystyle{plain}
\bibliography{baxter.bib}

\end{document}